\documentclass[12pt]{article}
\title{A note on \textit{fsg} groups in $p$-adically closed fields}
\author{Will Johnson}

\usepackage{amsmath, amssymb, amsthm}    	
\usepackage{fullpage} 	
\usepackage{amscd}
\usepackage{hyperref}
\usepackage[all]{xy}
\usepackage{centernot}

\DeclareMathOperator*{\forkindep}{\raise0.2ex\hbox{\ooalign{\hidewidth$\vert$\hidewidth\cr\raise-0.9ex\hbox{$\smile$}}}}

\newcommand{\Av}{\operatorname{Av}}

\newcommand{\Aut}{\operatorname{Aut}}

\newtheorem{theorem}{Theorem}[section] 
\newtheorem{lemma}[theorem]{Lemma}

\newtheorem{corollary}[theorem]{Corollary}
\newtheorem{fact}[theorem]{Fact}

\newtheorem{proposition}[theorem]{Proposition}
\newtheorem{proposition-eh}[theorem]{Proposition(?)}
\newtheorem*{theorem-star}{Theorem}
\newtheorem*{conjecture-star}{Conjecture}
\newtheorem*{lemma-star}{Lemma}
\newtheorem{claim}[theorem]{Claim}

\theoremstyle{definition}
\newtheorem{definition}[theorem]{Definition}

\newtheorem{remark}[theorem]{Remark}

\theoremstyle{remark}

\newtheorem*{acknowledgment}{Acknowledgments}

\newcommand{\Qq}{\mathbb{Q}}

\newcommand{\Rr}{\mathbb{R}}

\newcommand{\Mm}{\mathbb{M}}

\newcommand{\Oo}{\mathcal{O}}

\newenvironment{claimproof}[1][\proofname]
               {
                 \proof[#1]
                 
               }
               {
                 \endproof
               }

\begin{document}
\maketitle

\begin{abstract}
  Let $G$ be a definable group in a $p$-adically closed field $M$.  We
  show that $G$ has finitely satisfiable generics (\textit{fsg}) if and
  only if $G$ is definably compact.  The case $M = \Qq_p$ was
  previously proved by Onshuus and Pillay.
\end{abstract}

\section{Introduction}
Work in a monster model $\Mm$ of some theory.  Let $G$ be a definable
group.  Say that a definable set $X \subseteq G$ is \emph{left
generic} or \emph{right generic} if $G$ can be covered by finitely
many left translates or right translates of $X$, respectively.  A
definable group $G$ is said to have \emph{finitely satisfiable
generics} (\textit{fsg}) if there is a small model $M_0$ and a global
type $p \in S_G(\Mm)$ such that every left translate $g \cdot p$ is
finitely satisfiable in $M_0$.  This notion is due to Hrushovski,
Peterzil, and Pillay \cite{goo}, who prove the following facts:
\begin{fact}[{\cite[Proposition~4.2]{goo}}]\label{hpp-fact}
  Suppose $G$ has \textit{fsg}, witnessed by $p$ and $M_0$.
  \begin{enumerate}
  \item A definable set $X \subseteq G$ is left generic iff it is
    right generic.
  \item Non-generic sets form an ideal: if $X \cup Y$ is generic, then
    $X$ is generic or $Y$ is generic.
  \item A definable set $X$ is generic if and only if every left
    translate of $X$ intersects $G(M_0)$.
  \end{enumerate}
\end{fact}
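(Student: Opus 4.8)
The plan is to reduce all three parts to a single description of genericity in terms of the witnessing type: \emph{a definable $X \subseteq G$ is left generic if and only if $X \in g \cdot p$ for some $g \in G(\Mm)$}. The starting point is to unwind the hypothesis. Saying that $g \cdot p$ is finitely satisfiable in $M_0$ means exactly that every definable set belonging to $g \cdot p$ meets $G(M_0)$; since $gZ \in g \cdot p$ whenever $Z \in p$, this says that for every definable $Z \in p$ and every $g \in G(\Mm)$ the translate $gZ$ meets $G(M_0)$. Fixing such a $Z$, this is the same as $G(\Mm) \subseteq \bigcup_{m \in G(M_0)} mZ^{-1}$. The right-hand side is a union, indexed by the small set $G(M_0)$, of members of a uniformly definable family, so by saturation of $\Mm$ finitely many of them already cover $G$; hence $Z^{-1}$ is left generic and $Z$ is right generic. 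So \emph{membership in $p$ forces right-genericity}, and this is the one substantive input.

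Part (1) then follows quickly. If $X$ is left generic, say $G = g_1 X \cup \dots \cup g_n X$, then the complete type $p$ must contain some $g_i X$, which is therefore right generic; since right-genericity is obviously preserved under left translation, $X = g_i^{-1}(g_i X)$ is right generic too. Conversely, if $X$ is right generic then $X^{-1}$ is left generic, hence right generic by the above, hence $X$ is left generic. With (1) in hand ``generic'' is unambiguous, and the characterization drops out: if $X \in g \cdot p$ then $g^{-1}X \in p$ is generic, so its left translate $X$ is generic; conversely, a covering $G = g_1 X \cup \dots \cup g_n X$ puts some $g_i X$ in $p$, i.e. $X \in g_i^{-1} \cdot p$. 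Part (2) is now formal: if $X \cup Y$ is generic it lies in some $g \cdot p$, and since $g \cdot p$ is a \emph{complete} type it contains $X$ or $Y$, which is accordingly generic.

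For part (3), the forward direction: if $X$ is generic then so is each left translate $gX$, hence $gX \in h \cdot p$ for some $h$; as $h \cdot p$ is finitely satisfiable in $M_0$, the definable set $gX$ meets $G(M_0)$. For the converse, assume every left translate of $X$ meets $G(M_0)$; exactly the computation from the first paragraph gives $G(\Mm) \subseteq \bigcup_{m \in G(M_0)} mX^{-1}$, so by saturation $X^{-1}$ is left generic, whence $X$ is right generic and, by (1), generic. The only non-bookkeeping step in the whole argument is this passage from a small cover of $G$ by translates to a finite subcover, powered by saturation of the monster; I expect that step --- together with pinning down the dictionary ``$Z \in p$ $\Leftrightarrow$ $Z$ is generic up to translation'' --- to be where the real work sits, everything else being translation arithmetic and completeness of the types $g \cdot p$.
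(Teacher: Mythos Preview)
Your argument is correct. Note, however, that the paper does not supply its own proof of this statement: it is recorded as a \emph{Fact} and attributed to \cite[Proposition~4.2]{goo}, so there is no in-paper proof to compare against. Your sketch is essentially the standard argument from that reference --- the key idea being that finite satisfiability of every translate of $p$ in $M_0$, together with saturation, forces every set in $p$ to be right generic, after which everything reduces to completeness of the types $g\cdot p$ and translation bookkeeping.
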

If $G$ is a group definable in a nice o-minimal structure, then $G$
has \textit{fsg} if and only if $G$ is definably compact
\cite[Remark~5.3]{udi-anand}.  Our main theorem is an analogue for
$p$-adically closed fields:
\begin{theorem} \label{main-thm}
  Let $M$ be a $p$-adically closed field and $G$ be an $M$-definable
  group.  Then $G$ is definably compact if and only if $G$ has
  \textit{fsg}.
\end{theorem}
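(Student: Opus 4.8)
The plan is to prove the two implications separately, both resting on the structure theory of groups definable in a $p$-adically closed field: the $p$-adic analogue of Pillay's theorem for groups in o-minimal structures gives $G$ a canonical definable manifold structure over $M$ (a finite cover by charts identifying open definable subsets of $G$ with open definable subsets of $M^{\dim G}$); \emph{definable compactness} of $G$ can then be detected on definable curves, i.e.\ $G$ is definably compact iff every definable map $\gamma\colon I\to G$, with $I$ a punctured ball in $M$ or an end segment of the value group $\Gamma$, has a limit in $G$; and a definable quotient of an \textit{fsg} group is again \textit{fsg}. Fact~\ref{hpp-fact} will be used freely.

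\textbf{\textit{fsg} $\Rightarrow$ definably compact.} I argue by contraposition. Suppose \textit{fsg} is witnessed by a global type $q\in S_G(\Mm)$ and a small model $M_0$ containing the parameters defining $G$. The first step is the standard observation that every formula $\varphi\in q$ is generic: finite satisfiability of $g\cdot q$ in $M_0$ for all $g$ yields $G(\Mm)=\bigcup_{m\in G(M_0)}\{g:g^{-1}m\in\varphi\}$, a covering of $G$ by boundedly many, hence by compactness finitely many, translates of $\varphi^{-1}$. The second step is to deduce from non-definable-compactness that $q$ must contain a formula that is not finitely satisfiable in $M_0$. Using the manifold structure one extracts a definable curve with no limit in $G$, and the structure theory then reduces matters to a $1$-dimensional non-definably-compact group arising as a definable subquotient of $G$, in the field sort or the value-group sort; there every bounded (equivalently definably compact) definable set is non-generic, so a type built from generic formulas alone, such as $q$, is forced to be the ``type at infinity'' $\{\val(x)<\delta:\delta\in\Gamma(\Mm)\}$ (or its value-group analogue), and this omits every small model, since $\Gamma(\Mm)$ contains elements below all of $\Gamma(M_0)$. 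The configurations admitting no such subquotient (semisimple-type groups) need a separate treatment, using definable subsets of $G$ that ``escape to infinity'' to contradict the genericity of all formulas of $q$. The case $M=\Qq_p$ is the theorem of Onshuus and Pillay.

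\textbf{Definably compact $\Rightarrow$ \textit{fsg} (the main point).} Here one must manufacture a witnessing pair, and this is where the real work lies. Fix a small model $M_0$ containing the parameters for $G$; by definable compactness we may take an atlas all of whose charts have closed bounded image in $M^{\dim G}$. Next, produce a global type $p$ on $G$ that is a coheir over $M_0$ (hence finitely satisfiable in $M_0$) and generic, in the sense that every formula in it is generic—for instance by pushing forward, through the charts and by induction on $\dpr(G)=\dim G$, the ``generic'' coheir of a closed bounded ball $\Oo\subseteq M$ over $M_0$; that this coheir exists rests on the elementary fact that the finite residue rings $\Oo/\mm^n\Oo$ are already realized over $M_0$, so finite approximations of a generic point can be found in $M_0$. (A left-invariant Haar-type measure on $G$, whose existence for definably compact $G$ also comes from the structure theory, can be used in place of this inductive construction.) Finally—the crux—one must show every left translate $g\cdot p$ remains finitely satisfiable in $M_0$: definable compactness should force each $g\in G(\Mm)$ to factor as an element of $G(M_0)$ times an element infinitesimally close to the identity, and the ``spread out'' coheir $p$ is insensitive to the latter on exactly the bounded-radius clopen formulas that decide finite satisfiability, while translation by the $G(M_0)$-part manifestly preserves it. I expect the principal obstacles to be (a) choosing $M_0$ small yet saturated enough for this ``standard part'' factorization of $G(\Mm)$ to hold, and (b) propagating genericity, finite satisfiability, and translation-control simultaneously through the chart-by-chart inductive construction of $p$.
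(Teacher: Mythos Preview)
Your proposal diverges from the paper in both directions, and in the hard direction there is a genuine gap.

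\textbf{Easy direction.} Your argument is far more elaborate than necessary and contains acknowledged holes (the ``semisimple-type'' case). The paper's proof is a few lines and uses none of the subquotient analysis you sketch: take a $\Gamma$-exhaustion $\{W_\gamma\}$ of $G$; by saturation $G(M_0)\subseteq W_\gamma$ for some $\gamma$; then $G\setminus W_\gamma$ misses $G(M_0)$, so by Fact~\ref{hpp-fact} it is non-generic, hence $W_\gamma$ is generic; finitely many translates of the definably compact set $W_\gamma$ cover $G$, so $G$ is definably compact. No curves, no $1$-dimensional subquotients, no case split.

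\textbf{Hard direction.} The crux of your plan is the factorization $g = g_0\cdot\varepsilon$ with $g_0\in G(M_0)$ and $\varepsilon$ infinitesimal, used to show that left-translates of your coheir $p$ remain finitely satisfiable in $M_0$. But this factorization is essentially the statement that the standard-part map $G(\Mm)\to G(M_0)$ is surjective modulo infinitesimals, which is a \emph{consequence} of \textit{fsg} (via the existence of $G^{00}$ and the compact quotient $G/G^{00}$), not something available in advance. You flag this yourself as obstacle~(a), but it is not a matter of choosing $M_0$ carefully: without already knowing \textit{fsg} or something equivalent, there is no reason $G(M_0)$ should be ``dense modulo infinitesimals'' in $G(\Mm)$. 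Your parenthetical alternative---a left-invariant Haar-type measure---is closer to what actually works, but you do not say how to obtain such a measure over a general $M$, nor how to show it is finitely satisfiable in a small model.

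The paper's route is entirely different and avoids this circularity. It first proves (Section~\ref{sec:definable}) that definable compactness is a \emph{definable} condition in families of definable groups, so any definably compact $G$ sits in a $0$-definable family of definably compact groups. Over $\Qq_p$ each such group carries an honest Haar measure, and the VC-theorem (Section~\ref{sec:hard}) gives, for each formula $\phi$ and each $\varepsilon>0$, a finite subset of $G(\Qq_p)$ meeting every $\phi$-set of measure $>\varepsilon$, with the size of the subset depending only on the VC-dimension of $\phi$ and on $\varepsilon$. This uniformity is first-order, so it transfers to the monster via an auxiliary expansion $\Qq_p^\dag$ recording the Haar-measure functions (Section~\ref{sec:conclusion}); the transferred ``measure'' is then a left-invariant Keisler measure on $G(\Mm)$ which is finitely satisfiable in a countable set, and \textit{fsg} follows. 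The two ingredients you are missing are precisely the definability-in-families step and the uniform VC-bound that makes the transfer go through.
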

The case $M = \Qq_p$ was proved by Onshuus and Pillay
\cite[Corollary~2.3]{O-P}.

\subsection{Notation}
We denote the theory of $p$-adically closed fields by $p$CF.  In a
$p$-adically closed field $M$, we will let $\Gamma$ or $\Gamma(M)$
denote the value group, and $\Oo$ or $\Oo(M)$ denote the valuation
ring.  The valuation will be denoted $v(x)$, and written additively,
so $v(xy) = v(x) + v(y)$ and $v(x+y) \ge \min(v(x),v(y))$.  If $D$ is
a definable set, we write $S_D(M)$ for the set of complete types over
$M$ concentrating on $D$.

\subsection{Outline}
In Section~\ref{sec:tops} we review the notion of definable
compactness in definable groups in $p$-adically closed fields.  In
Section~\ref{sec:easy} we give the easy direction of
Theorem~\ref{main-thm}: if $G$ has \textit{fsg} then $G$ is definably compact.  In Section~\ref{sec:definable} we show that
definable compactness is a definable property---it varies definably in
a definable family of definable groups.  This ensures that every
definably compact group is part of a 0-definable family of definably
compact groups.  In Section~\ref{sec:hard} we use the VC-theorem to
show that \textit{fsg} is witnessed in a very uniform manner for
definably compact groups over $\Qq_p$.  In
Section~\ref{sec:conclusion} we show that definable compactness
implies \textit{fsg} by using Section~\ref{sec:definable} to transfer
the ``uniform \textit{fsg}'' of Section~\ref{sec:hard} from $\Qq_p$ to
its elementary extensions.

\section{Definable compactness in $p$CF} \label{sec:tops}
If $X$ is a definable set and $\tau$ is a topology on $X$, we say that
$\tau$ is \emph{definable} if there is a definable family $\{B_i\}_{i
  \in I}$ such that $\{B_i : i \in I\}$ is a basis for the topology.
Two examples of definable topologies are the order topology on an
o-minimal structure and the valuation topology on a $p$-adically
closed field.  A \emph{definable topological space} is a definable set
with a definable topology.

We will use the following abstract notion of definable compactness,
which works well in $p$-adically closed fields and o-minimal
structures:
\begin{definition}[{\cite{fornasiero, wj-o-minimal}}]\label{f-j-def}
  A definable topological space $X$ is \emph{definably compact} if the
  following holds: if $\{F_i\}_{i \in I}$ is a definable family of
  non-empty closed subsets of $X$, and $\{F_i : i \in I\}$ is
  downwards-directed, then the intersection $\bigcap_{i \in I} F_i$ is
  non-empty.

  A definable subset $Y \subseteq X$ is definably compact if it is
  compact with the induced subspace topology, which is definable.
\end{definition}

Definable compactness has many properties analogous to compactness
\cite[Section~3.1]{wj-o-minimal}.  We will need the following two
trivial observations:
\begin{fact}\label{k-facts}~
\begin{enumerate}
\item If $f : X \to Y$ is a definable continuous function between
  definable topological spaces, and $D \subseteq X$ is definably
  compact, then the image $f(D) \subseteq Y$ is definably compact.
\item If $X$ is a definable topological space and $D_1, D_2 \subseteq
  X$ are definably compact, then $D_1 \cup D_2$ is definably compact.
\end{enumerate}  
\end{fact}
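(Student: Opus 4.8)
The plan is to verify both parts directly from Definition~\ref{f-j-def}, using the remark recorded there that the subspace topology on a definable subset is again definable, so that the auxiliary families of closed sets produced below are honest definable families to which the definition applies. Throughout, $f(D)$ and $D_1 \cup D_2$ are definable, being the image of a definable set under a definable map and a union of definable sets respectively.

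For part (1), I would begin with an arbitrary definable family $\{F_i\}_{i \in I}$ of non-empty closed subsets of $f(D)$ which is downwards-directed, and aim to show $\bigcap_{i \in I} F_i \neq \emptyset$. The key step is to pull back along $f|_D \colon D \to f(D)$, which is definable and continuous: put $G_i = \{x \in D : f(x) \in F_i\}$. Then $\{G_i\}_{i \in I}$ is a definable family; each $G_i$ is closed in $D$ by continuity of $f|_D$ (a set closed in the subspace $f(D)$ is the trace of a closed subset of $Y$, whose preimage is closed in $X$, hence in $D$); each $G_i$ is non-empty because $F_i \subseteq f(D)$; and the family is downwards-directed since $F_k \subseteq F_i \cap F_j$ forces $G_k \subseteq G_i \cap G_j$. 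Definable compactness of $D$ then yields some $x \in \bigcap_{i} G_i$, and $f(x) \in \bigcap_i F_i$, as desired.

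For part (2), I would take a definable downwards-directed family $\{F_i\}_{i \in I}$ of non-empty closed subsets of $D_1 \cup D_2$ and split each $F_i$ as $F_i \cap D_1$ and $F_i \cap D_2$, which are closed in $D_1$ and in $D_2$ respectively (again a set closed in the subspace $D_1 \cup D_2$ is the trace of a closed set in $X$). The main observation is combinatorial: either $F_i \cap D_1 \neq \emptyset$ for all $i$, or $F_i \cap D_2 \neq \emptyset$ for all $i$. Otherwise we have $i_0, i_1$ with $F_{i_0} \cap D_1 = \emptyset = F_{i_1} \cap D_2$; choosing $k$ with $F_k \subseteq F_{i_0}$ and $F_k \subseteq F_{i_1}$ gives $F_k \cap D_1 = F_k \cap D_2 = \emptyset$, so $F_k = \emptyset$ since $F_k \subseteq D_1 \cup D_2$, contradicting non-emptiness of $F_k$. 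In the first case $\{F_i \cap D_1\}_{i \in I}$ is a definable downwards-directed family of non-empty closed subsets of the definably compact space $D_1$, so $\emptyset \neq \bigcap_i (F_i \cap D_1) \subseteq \bigcap_i F_i$; the second case is symmetric.

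I do not expect any genuine obstacle here --- these are the ``trivial observations'' promised in the text --- and the only point demanding a little care is the bookkeeping around subspace topologies: checking that ``closed in $f(D)$'' and ``closed in $D_j$'' interact correctly with continuity and with taking traces of closed sets, and confirming that the families $\{G_i\}$ and $\{F_i \cap D_j\}$ are genuinely definable so that Definition~\ref{f-j-def} is applicable.
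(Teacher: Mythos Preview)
Your argument is correct: both parts follow directly from Definition~\ref{f-j-def} exactly as you outline, and the bookkeeping around subspace topologies and definability of the auxiliary families is handled properly. The paper does not actually supply a proof here---it records these as ``trivial observations'' and gestures toward \cite[Section~3.1]{wj-o-minimal} for general properties of definable compactness---so your write-up is precisely the routine verification the paper omits.
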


\subsection{Definable manifolds}
Work in a $p$-adically closed field $M$.  A \emph{definable manifold}
is a Hausdorff definable topological space $X$ covered by finitely
many definable opens $U_1, \ldots, U_k$, such that each $U_i$ is
definably homeomorphic to an open subset of $M^n$.  Definable
manifolds arise naturally in the study of definable groups by the
following theorem of Pillay:
\begin{fact}[{\cite{Pillay-G-in-p}}] \label{pillay-manifold}
  If $G$ is a definable group (in a $p$-adically closed field), then
  there is a unique definable topology $\tau$ on $G$ such that
  \begin{itemize}
  \item The group operations are continuous.
  \item $(G,\tau)$ is a definable manifold.
  \end{itemize}
\end{fact}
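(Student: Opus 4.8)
The plan is to follow Pillay's construction of the group topology in the o-minimal setting, with $p$-adic cell decomposition (Denef) and the $p$-adic dimension theory playing the role of their o-minimal counterparts. Assume $G \subseteq M^n$ is definable and set $d = \dim G$, where $\dim$ is the usual $\acl$-based dimension in $p$CF; recall that it is additive, invariant under definable bijections, and definable in definable families. The topology $\tau$ will be obtained by transporting a single \emph{generic chart} around $G$ by left translation.

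First I would produce the generic chart. By $p$-adic cell decomposition there is a definable $C \subseteq G$ with $\dim(G \setminus C) < d$ and a definable homeomorphism $\phi \colon C \to B$ onto a definable open set $B \subseteq M^d$, where $C$ carries the subspace topology induced from $M^n$ (locally a top-dimensional cell is the graph of a continuous definable function over an open subset of $M^d$). Since $\dim C = \dim G$, for every $a \in G$ the set $a C^{-1} = \{a c^{-1} : c \in C\}$ is definable of dimension $d \geq 0$, hence non-empty, so $a$ lies in some left translate of $C$; by compactness finitely many translates $g_1 C, \dots, g_k C$ cover $G$. One then declares $O \subseteq G$ to be $\tau$-open if and only if $\phi\bigl(g^{-1}O \cap C\bigr)$ is open in $B$ for every $g \in G$, equivalently one tries to use the maps $\phi_g \colon gC \to B$, $x \mapsto \phi(g^{-1}x)$, as a finite atlas.

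The heart of the argument is to verify that $\tau$ is a genuine definable manifold topology and that multiplication and inversion are $\tau$-continuous. The essential input is \emph{generic continuity of definable functions} in $p$CF: a definable function is continuous off a definable set of smaller dimension. Reading the multiplication map $m \colon G \times G \to G$ through the chart $\phi$ yields a definable function that is continuous near generic points; translating this statement by group elements (homogeneity of $G$) promotes it to continuity everywhere in the chart pictures. This simultaneously shows that the transition maps $\phi_{g_2} \circ \phi_{g_1}^{-1}$ are continuous, so that $\tau$ is well defined and makes $G$ a definable manifold, and that $m$ and inversion are $\tau$-continuous. For uniqueness, if $\tau_1$ and $\tau_2$ both satisfy the conclusion, then $\id \colon (G,\tau_1) \to (G,\tau_2)$ is a definable bijection between definable manifolds, hence continuous off a definable set of dimension $< d$; being a group isomorphism, continuity at one point together with translation forces continuity everywhere, and likewise for the inverse, so $\tau_1 = \tau_2$.

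The main obstacle is the homogenization step in the previous paragraph: turning the merely generic continuity of multiplication into a coherent global structure. Concretely, one must check that the candidate $\tau$-open sets really form a topology, that the charts $\phi_g$ are pairwise compatible, and that $\tau$ does not depend on the choice of generic chart $C$ — this is exactly the delicate part of Pillay's o-minimal proof, and it rests on dimension in $p$CF being well behaved in families together with the fact, used already for the covering above, that every full-dimensional definable subset of a definable group is generic.
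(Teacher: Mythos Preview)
The paper does not prove this statement at all: it is recorded as a \emph{Fact} with a citation to Pillay's paper and no argument is given.  So there is nothing in the paper to compare against.

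Your sketch is essentially Pillay's original proof, transported from the o-minimal to the $p$-adic setting, and you have correctly identified the ingredients (cell decomposition, dimension theory, generic continuity of definable maps) and the genuinely delicate step (promoting generic continuity of the group operations to a coherent atlas via translation).  One point deserves more care: the phrase ``by compactness finitely many translates $g_1 C, \ldots, g_k C$ cover $G$'' is not justified as written.  Compactness of the Stone space only gives a finite subcover of $\{[x \in gC] : g \in G(M)\}$ if these clopen sets already cover $S_G(M)$, and for a type realized by $a$ in a proper elementary extension there is no reason for $g^{-1}a \in C$ to hold for some $g \in G(M)$.  What is actually needed is the dimension-lowering argument: for $Y = G \setminus C$ of dimension $< d$, one shows (via the fiber-dimension inequality applied to $\{(g,y) : y \in Y,\, g^{-1}y \in Y\}$) that some translate $gY$ meets $Y$ in strictly smaller dimension, and then iterates.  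You allude to this at the end (``every full-dimensional definable subset of a definable group is generic''), but that is precisely the lemma doing the work, not logical compactness.  Hausdorffness of $(G,\tau)$ is also not addressed; once the group operations are continuous it reduces to $T_1$, but that still needs a line.
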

On definable manifolds, we can give a concrete characterization of
definable compactness.  First consider the case $M^n$:
\begin{fact}[{\cite[Lemmas~2.4, 2.5]{johnson-yao}}]\label{base-situation}
  A definable set $D \subseteq M^n$ is definably compact iff it is
  closed and bounded.
\end{fact}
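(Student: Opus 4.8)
The plan is to prove the two directions separately. For ``definably compact $\Rightarrow$ closed and bounded'' I would exhibit explicit badly-behaved definable directed families; for ``closed and bounded $\Rightarrow$ definably compact'' I would transfer the statement from $\Qq_p$, where closed bounded sets are genuinely compact. Throughout, for $x \in M^n$ write $v(x)$ for $\min_i v(x_i)$.

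For the forward direction, suppose $D \subseteq M^n$ is definably compact. If $D$ is not closed, pick a point $a$ in the topological closure of $D$ with $a \notin D$, and for $z \in M^\times$ set $F_z = D \cap \{x : v(x - a) \ge v(z)\}$. Each $F_z$ is a closed subset of $D$, nonempty because $a$ is in the closure of $D$; the family $\{F_z\}_{z \in M^\times}$ is definable and totally ordered by inclusion (as the balls $\{x : v(x-a) \ge v(z)\}$ are), hence downwards-directed. But $\bigcap_z F_z = D \cap \{a\} = \emptyset$, since $\Gamma$ has no largest element, contradicting definable compactness. If instead $D$ is unbounded, set $F_z = D \setminus \{x : v(x) > v(z)\}$; this is again a definable downwards-directed family of nonempty closed subsets of $D$, but $\bigcap_z F_z = D \setminus M^n = \emptyset$ because $\Gamma$ has no smallest element, so every point of $M^n$ eventually falls into one of the open balls being removed. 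Either way we contradict definable compactness, so $D$ is closed and bounded.

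For the converse, the key point is that for a fixed definable set $D$, ``$D$ is definably compact'' is an elementary property: a conjunction of first-order sentences in the parameters. Fix formulas $\delta(x, \bar c)$ defining $D$ and $\psi(i, \bar a)$, $\varphi(x, i, \bar a)$ defining a candidate index set $I$ and family $\{F_i\}_{i \in I}$. Then ``$D$ is closed'' is first-order (a definable subset of $M^n$ is closed in the valuation topology iff it contains every point all of whose basic balls meet it), ``$D$ is bounded'' is $\exists w \in M^\times\ \forall x \in D\ (v(x) \ge v(w))$, ``$\{F_i\}_{i \in I}$ is a downwards-directed family of nonempty closed subsets of $D$'' is plainly first-order, and ``$\bigcap_{i \in I} F_i \ne \emptyset$'' is $\exists x\ \forall i \in I\ (x \in F_i)$. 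Hence for each such triple of formulas the implication ``if $D$ is closed and bounded and $\{F_i\}$ is a downwards-directed family of nonempty closed subsets of $D$, then $\bigcap_i F_i \ne \emptyset$'' is a single first-order sentence, and ``every closed bounded definable set is definably compact'' is the conjunction of these (countably many) sentences. Each holds in $\Qq_p$: there a closed bounded subset of $\Qq_p^n$ is compact in the usual sense, and in a compact space a downwards-directed family of nonempty closed sets has the finite intersection property, hence nonempty intersection. Since $p$CF is complete and $M \models p$CF, all these sentences hold in $M$, which gives the converse.

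The main obstacle is the elementarity observation behind the converse---that closedness in the valuation topology is a definable condition, uniformly across definable families, so that ``no definable family refutes compactness'' unwinds into a first-order schema checkable in $\Qq_p$. The remaining inputs (completeness of $p$CF and local compactness of $\Qq_p^n$) are classical. A more hands-on alternative would be to reduce, by scaling together with the standard fact that closed subspaces of definably compact spaces are definably compact, to proving that $\Oo^n$ is definably compact, and then establish that using the structure theory of definable subsets of $M^n$; but the transfer argument is shorter.
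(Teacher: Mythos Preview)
The paper does not give its own proof of this statement; it records it as a Fact and defers to \cite[Lemmas~2.4, 2.5]{johnson-yao}. So there is no in-paper argument to compare against directly.

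Your proof is correct. The forward direction is the standard construction: exhibit an explicit definable nested family of nonempty closed subsets of $D$ with empty intersection when $D$ fails to be closed or fails to be bounded; both of your families do the job, using that $\Gamma$ has no greatest and no least element. Your converse, via transfer from $\Qq_p$ using completeness of $p$CF, also works exactly as you describe: closedness, boundedness, directedness, nonemptiness of each $F_i$, and nonemptiness of $\bigcap_{i} F_i$ are all uniformly first-order in the parameters, so for each triple of formulas the desired implication is a single $\mathcal{L}_{rings}$-sentence, true in $\Qq_p$ by honest compactness of closed bounded subsets of $\Qq_p^{\,n}$, and hence true in every model of $p$CF. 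The only point worth a moment's care is that ``$F_i$ is closed in the subspace topology on $D$'' must also be uniformly first-order, but this is immediate since it unpacks to ``$D \setminus F_i$ is relatively open in $D$,'' expressible with balls just as absolute closedness is. This transfer strategy is very much in the spirit of what the paper itself does later when moving \textit{fsg} from $\Qq_p$ to arbitrary models.
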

Here, a set $D$ is ``bounded'' if there is $N \in \Gamma$ such that
$v(x_i) > N$ for all $x_i \in D$.

This can be generalized to other definable manifolds using the
following notion:
\begin{definition}
  Let $X$ be a definable manifold.  A \emph{$\Gamma$-exhaustion} is a
  definable family $\{W_\gamma\}_{\gamma \in \Gamma}$ such that
  \begin{enumerate}
  \item Each $W_\gamma$ is an open, definably compact subset of $X$.
  \item $\gamma \le \gamma' \implies W_\gamma \subseteq W_{\gamma'}$.
  \item $X = \bigcup_{\gamma \in \Gamma}$.
  \end{enumerate}
\end{definition}
For example, in $M^n$, if $B_\gamma(0)$ denotes the ball of radius
$\gamma$ around $0 \in M^n$, then $\{B_{-\gamma}(0)\}_{\gamma \in
  \Gamma}$ is a $\Gamma$-exhaustion.
\begin{fact}[{\cite[Remark~2.8]{johnson-yao}}] \label{gamma-tech}
  Let $X$ be a definable manifold.
  \begin{enumerate}
  \item \label{gt-1} There is at least one $\Gamma$-exhaustion on $X$.
  \item \label{gt-2} Suppose we write $X$ as a finite union $U_1 \cup \cdots \cup
    U_k$ of definable open sets.  Suppose that for $i < k$, the family
    $\{W^i_\gamma\}$ is a $\Gamma$-exhaustion of $U_i$.  Let
    $V_\gamma = \bigcup_{i = 1}^k W^i_\gamma$.  Then
    $\{V_\gamma\}$ is a $\Gamma$-exhaustion of $X$.
  \end{enumerate}
\end{fact}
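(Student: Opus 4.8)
The plan is to prove the two clauses in the order (2) then (1), since the construction for (1) reduces, via (2) and the definable manifold structure, to the case of an open subset of $M^n$.

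Part~(2) is routine bookkeeping. (The statement writes ``for $i < k$'' but must mean ``for $i \le k$'', since $V_\gamma$ involves $W^k_\gamma$.) Put $V_\gamma = \bigcup_{i=1}^k W^i_\gamma$. Each $W^i_\gamma$ is open in $U_i$, which is open in $X$, so $W^i_\gamma$ is open in $X$; hence $V_\gamma$ is open in $X$. The subspace topology $W^i_\gamma$ inherits from $U_i$ agrees with the one it inherits from $X$, so each $W^i_\gamma$ is a definably compact subset of $X$, and $V_\gamma$ is definably compact by Fact~\ref{k-facts}(2). Monotonicity of $\{V_\gamma\}$ follows from that of each $\{W^i_\gamma\}$, and $\bigcup_\gamma V_\gamma = \bigcup_i \bigcup_\gamma W^i_\gamma = \bigcup_i U_i = X$; definability of the family is immediate.

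For part~(1): since $X$ is a definable manifold, write $X = U_1 \cup \cdots \cup U_k$ with each $U_i$ definably homeomorphic to a definable open $V_i \subseteq M^n$. A $\Gamma$-exhaustion of $V_i$ transports through the homeomorphism to a $\Gamma$-exhaustion of $U_i$, since a definable homeomorphism preserves openness, preserves definable compactness (Fact~\ref{k-facts}(1)), and preserves inclusions and unions. By part~(2) it therefore suffices to construct a $\Gamma$-exhaustion of an arbitrary definable open $U \subseteq M^n$. For $\gamma \in \Gamma$ let $B_\gamma(x) = \{\, y \in M^n : v(y_i - x_i) \ge \gamma \text{ for all } i \,\}$ be the closed ball of valuative radius $\gamma$, and put
\[
  W_\gamma \ = \ \{\, x \in M^n : B_\gamma(x) \subseteq U \,\} \ \cap \ B_{-\gamma}(0).
\]
I claim $\{W_\gamma\}_{\gamma \in \Gamma}$ is a $\Gamma$-exhaustion of $U$. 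First, $W_\gamma \subseteq U$ (take $y = x$). Next, by the ultrametric inequality two balls of equal valuative radius are either equal or disjoint, so if $B_\gamma(x) \subseteq U$ and $v(x'_i - x_i) \ge \gamma$ for all $i$, then $B_\gamma(x') = B_\gamma(x) \subseteq U$; thus $\{x : B_\gamma(x) \subseteq U\}$, and by the same reasoning its complement, is open, i.e.\ it is clopen. Hence $W_\gamma$ is clopen and bounded, so definably compact by Fact~\ref{base-situation}. Monotonicity holds because as $\gamma$ grows $B_\gamma(x)$ shrinks while $B_{-\gamma}(0)$ grows. For covering: given $x \in U$, openness of $U$ yields $\gamma_1$ with $B_{\gamma_1}(x) \subseteq U$, and $\gamma_2 := -\min\{v(x_i) : x_i \ne 0\}$ (or $\gamma_2 = 0$ if $x = 0$) gives $x \in B_{-\gamma_2}(0)$, so $x \in W_\gamma$ for $\gamma = \max(\gamma_1, \gamma_2)$. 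Definability of $\{W_\gamma\}$, uniformly in the parameters defining $U$, is visible from the formula. Transporting these exhaustions back to the $U_i$ and applying part~(2) finishes part~(1).

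I do not expect a serious obstacle: every step is soft. The one genuinely load-bearing point is that in a valued field the set of $x$ whose radius-$\gamma$ ball lies inside $U$ is automatically \emph{clopen} rather than merely closed---this is exactly the ultrametric fact that equal-radius balls are disjoint-or-equal---which is what lets us produce subsets of $U$ that are simultaneously closed, bounded, and open, and what makes the $p$-adic case much easier than the o-minimal analogue.
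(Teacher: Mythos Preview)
The paper does not actually prove this statement: it is recorded as a \emph{Fact} with a citation to \cite[Remark~2.8]{johnson-yao}, and no argument is given in the text. So there is nothing to compare against directly.

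Your proof is correct and is the natural self-contained argument. Part~(2) is, as you say, bookkeeping; the only point worth making explicit (which you do) is that the subspace topology on $W^i_\gamma$ from $U_i$ agrees with that from $X$, so definable compactness transfers. For part~(1), your construction
\[
  W_\gamma = \{x : B_\gamma(x) \subseteq U\} \cap B_{-\gamma}(0)
\]
is the standard one, and your observation that the first factor is \emph{clopen} (not merely closed) by the ultrametric equal-radius-balls-coincide-or-miss property is exactly the point that makes Fact~\ref{base-situation} apply. The monotonicity and covering checks are routine and correctly handled. You are also right to flag the typo: the statement should read ``for $i \le k$'' rather than ``for $i < k$''.

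In short: your argument is sound and complete, and supplies what the paper chose to outsource to the cited reference.
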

We can then characterize definable compactness as follows:
\begin{fact} \label{char}
  Let $X$ be a definable manifold and $\{W_\gamma\}$ be a
  $\Gamma$-exhaustion.  Let $D \subseteq X$ be a definable set.  The
  following are equivalent:
  \begin{enumerate}
  \item \label{ch-1} $D$ is definably compact.
  \item \label{ch-2} $D$ is closed, and $D$ is \emph{bounded}, in the sense that $D
    \subseteq W_\gamma$ for some $\gamma$.
  \item \label{ch-3} For any definable continuous function $f : \Oo \setminus \{0\}
    \to D$, there is a point $p \in D$ which is a \emph{cluster point}
    of $f$ at 0, in the sense that for any neighborhood $U$ of $p$ and
    $V$ of 0, there is $x \in V \setminus \{0\}$ such that $f(x) \in
    U$.
  \item \label{ch-4} If $r$ is a 1-dimensional definable type concentrating on $D$,
    then there is a point $p \in D$ such that $r$ \emph{specializes
    to} $p$, in the sense that for any definable neighborhood $U$ of
    $p$, the type $r$ concentrates on $U$.
  \end{enumerate}
\end{fact}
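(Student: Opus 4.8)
The plan is to prove the cycle $(\ref{ch-1}) \Rightarrow (\ref{ch-2}) \Rightarrow (\ref{ch-3}) \Rightarrow (\ref{ch-4}) \Rightarrow (\ref{ch-1})$, reducing as much as possible to the already-established case $X = M^n$ (Fact~\ref{base-situation}) by working in the charts $U_i$ of the manifold structure. Throughout I would fix the chart cover $X = U_1 \cup \cdots \cup U_k$ with definable homeomorphisms $\varphi_i \colon U_i \to \varphi_i(U_i) \subseteq M^{n}$ onto open sets, and use Fact~\ref{gamma-tech} freely, in particular the fact that the $\Gamma$-exhaustion is essentially unique up to reindexing (two exhaustions $\{W_\gamma\}$, $\{W'_\gamma\}$ are mutually cofinal, since each $W_\gamma$ is definably compact hence covered by finitely many of the open $W'_{\gamma'}$, so contained in a single one by directedness).

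For $(\ref{ch-1}) \Rightarrow (\ref{ch-2})$: closedness of a definably compact subspace of a Hausdorff space is standard (intersect with a directed family of closed neighborhoods of an exterior point). Boundedness: the family $\{D \setminus W_\gamma\}_{\gamma \in \Gamma}$ is a definable downward-directed family of closed subsets of the definably compact $D$ with empty intersection (as $\bigcup_\gamma W_\gamma = X$), so some $D \setminus W_\gamma$ is empty. For $(\ref{ch-2}) \Rightarrow (\ref{ch-3})$: given $f \colon \Oo \setminus \{0\} \to D \subseteq W_\gamma$, cover $\overline{W_\gamma}$ — better, cover the definably compact $W_\gamma$ — by finitely many charts; by a pigeonhole/definable-choice argument the preimage $f^{-1}(U_i)$ is "large near $0$" for some $i$ (contains points of every punctured neighborhood of $0$), so we may compose with $\varphi_i$ and reduce to the case $D \subseteq M^n$ closed and bounded, where a cluster point exists by an honest compactness argument in $M^n$ (the coordinatewise limit along a suitable definable curve, or: the intersection of the directed family of closures of images of punctured balls is nonempty by Fact~\ref{base-situation}), and its image lies in $D$ since $D$ is closed.

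For $(\ref{ch-3}) \Rightarrow (\ref{ch-4})$: a $1$-dimensional definable type $r$ on $D$ is "given by a definable curve" — realize $r$ in an elementary extension by a point $b$, note $\dim(b/M) = 1$, and use that in $p$CF a $1$-dimensional definable type over $M$ concentrating on $D$ is the generic type of the germ at $0$ of some definable continuous $f \colon \Oo \setminus \{0\} \to D$ (this is where I expect to lean on a cell-decomposition / curve-selection input for $p$CF; the point $b$ lies on a definable curve, which after reparametrizing we arrange to be a map off a punctured disk). Then a cluster point $p$ of $f$ at $0$ specializes $r$: for any definable neighborhood $U$ of $p$, the set of $x$ with $f(x) \in U$ is a definable subset of $\Oo \setminus \{0\}$ accumulating at $0$, hence contains a punctured neighborhood of $0$ (definable subsets of $\Oo$ in one variable are finite unions of points and Swiss-cheese-like sets, so an accumulation point is interior), so $f$'s germ — i.e. $r$ — concentrates on $U$. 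Finally $(\ref{ch-4}) \Rightarrow (\ref{ch-1})$: suppose $\{F_i\}_{i \in I}$ is a definable downward-directed family of nonempty closed subsets of $D$ with $\bigcap_i F_i = \emptyset$; I want a contradiction. Using boundedness one can pass to $\overline{D} \cap W_\gamma$ and then into charts; the thing to produce is a $1$-dimensional definable type $r$ on $D$ that "converges into every $F_i$", and then its specialization $p$ would lie in every $F_i$ (as $F_i$ closed and $r$ concentrates arbitrarily near $p$, or more precisely $r$ concentrates on $F_i$ for each $i$ by directedness and a limiting argument), contradicting emptiness of the intersection.

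The main obstacle is the implication producing a convergent $1$-dimensional definable type out of a directed family of closed sets with empty intersection — equivalently, showing $(\ref{ch-4}) \Rightarrow (\ref{ch-1})$ (and dually the curve-realization step in $(\ref{ch-3}) \Rightarrow (\ref{ch-4})$). The clean route is: if $D$ is not definably compact, one gets a directed family $\{F_i\}$ with empty intersection; by compactness of the type space $S_D(\Mm)$ (over the monster) the intersection of the corresponding closed type-sets is nonempty, giving a global type $q$ all of whose members meet every $F_i$, but $q$ need not be definable or $1$-dimensional — so one must first cut $D$ down (intersect with a generic curve through the relevant region, using that $D \subseteq M^n$ after charts and that $F_i$ being nonempty closed and bounded lets us pick points and run curve selection) to extract an honest $1$-dimensional definable type, and verify it specializes to a point forced into $\bigcap_i F_i$. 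Making this curve-selection argument rigorous in $p$CF — rather than o-minimal — is where I would need to be careful, citing $p$-adic cell decomposition (Denef) for the structure of definable subsets of $\Oo$ and of definable curves.
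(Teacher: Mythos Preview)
The paper does not prove this statement: it is recorded as a Fact, with the three equivalences $(\ref{ch-1})\Leftrightarrow(\ref{ch-2})$, $(\ref{ch-1})\Leftrightarrow(\ref{ch-3})$, $(\ref{ch-1})\Leftrightarrow(\ref{ch-4})$ cited independently from \cite{johnson-yao}. So the organization there is hub-and-spoke with $(\ref{ch-1})$ at the center, not a cycle. Your $(\ref{ch-1})\Rightarrow(\ref{ch-2})$ and $(\ref{ch-2})\Rightarrow(\ref{ch-3})$ are fine as sketched. Your $(\ref{ch-3})\Rightarrow(\ref{ch-4})$ is more delicate than you indicate: in $p$CF the ``germ at $0$'' of a curve is not a single complete type (there are many $1$-types concentrating on every punctured ball around $0$), so the passage from ``$p$ is a cluster point of $f$'' to ``$r$ specializes to $p$'' needs a genuine argument matching the particular definable type $r$ to a particular approach to $0$, not merely the existence of some curve through a realization.

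Your $(\ref{ch-4})\Rightarrow(\ref{ch-1})$ has a problem beyond the acknowledged obstacle. You write ``using boundedness one can pass to $\overline{D}\cap W_\gamma$,'' but at this point in the cycle you have no boundedness --- that is part of $(\ref{ch-2})$, and only $(\ref{ch-1})\Rightarrow(\ref{ch-2})$ is available so far. More structurally, the natural way to extract a $1$-dimensional definable type witnessing $\neg(\ref{ch-4})$ from $\neg(\ref{ch-1})$ is to case-split on whether $D$ is closed and bounded and dismiss the ``yes'' case via $(\ref{ch-2})\Rightarrow(\ref{ch-1})$; in a pure cycle that move is circular. This is exactly what the hub-and-spoke layout buys you: once $(\ref{ch-1})\Leftrightarrow(\ref{ch-2})$ is established independently, one may freely reduce $\neg(\ref{ch-1})$ to $\neg(\ref{ch-2})$ when proving the harder directions of $(\ref{ch-1})\Leftrightarrow(\ref{ch-3})$ and $(\ref{ch-1})\Leftrightarrow(\ref{ch-4})$, and then curve selection at a missing boundary point (if $D$ is not closed) or along an escaping path (if $D$ is not bounded) produces the required witness directly.
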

The equivalence of (\ref{ch-1}) and (\ref{ch-2}) follows from
Proposition~2.9 and Fact~2.2(4--5) in \cite{johnson-yao}.  The
equivalence of (\ref{ch-1}) and (\ref{ch-3}) is
\cite[Proposition~2.15]{johnson-yao}.  The equivalence of (\ref{ch-1})
and (\ref{ch-4}) is \cite[Proposition~2.24]{johnson-yao}.
\begin{fact}[{\cite[Remark~2.12]{johnson-yao}}] \label{standard-compare}
  Let $X$ be a $\Qq_p$-definable manifold and $Y \subseteq X$ be
  $\Qq_p$-definable.  Then $Y$ is definably compact if and only if
  $Y(\Qq_p)$ is compact as a subset of the $p$-adic manifold
  $X(\Qq_p)$.
\end{fact}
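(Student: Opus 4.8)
Since definable compactness of the $\Qq_p$-definable set $Y$ is equivalent, by Fact~\ref{char}, to $Y$ being closed in $X$ and contained in some member of a $\Gamma$-exhaustion of $X$ --- a first-order condition on the parameters defining $Y$ --- it does not depend on the ambient model of $p$CF, and I may as well work entirely inside $\Qq_p$, so that $Y = Y(\Qq_p)$ and $X = X(\Qq_p)$. I would start with the affine case $X = \Qq_p^n$. By Fact~\ref{base-situation}, a definable $Y \subseteq \Qq_p^n$ is definably compact iff it is closed and bounded; since the valuation topology on $\Qq_p^n$ is the usual $p$-adic topology and $\Qq_p^n$ is locally compact, ``closed and bounded'' coincides with ``compact'' here (a bounded set lies in a compact polydisc and a closed subset of a compact set is compact; conversely a compact set is closed, and is bounded because the polydiscs form an increasing open cover). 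The same equivalence then holds for any $Y$ contained in an open $V \subseteq \Qq_p^n$, since both notions depend only on $Y$ with its subspace topology.

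For a general $\Qq_p$-definable manifold $X$, I would fix a finite $\Qq_p$-definable atlas $X = U_1 \cup \cdots \cup U_k$ with $\Qq_p$-definable homeomorphisms $\phi_i \colon U_i \to V_i$ onto open subsets of $\Qq_p^{n_i}$ (for $X$ a definable group this atlas is provided by Fact~\ref{pillay-manifold}). Each $U_i$ is a definable manifold, hence carries a $\Qq_p$-definable $\Gamma$-exhaustion $\{W^i_\gamma\}$ by Fact~\ref{gamma-tech}(\ref{gt-1}), and $V_\gamma := \bigcup_{i=1}^k W^i_\gamma$ is a $\Gamma$-exhaustion of $X$ by Fact~\ref{gamma-tech}(\ref{gt-2}), where $\Gamma = \Gamma(\Qq_p) = \Zz$. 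The point of this particular exhaustion is that its members are honestly compact: $W^i_\gamma$ is an open, definably compact subset of $U_i$, so $\phi_i(W^i_\gamma) \subseteq \Qq_p^{n_i}$ is definably compact by Fact~\ref{k-facts}(1), hence compact by the affine case, hence $W^i_\gamma$ is compact and so is the finite union $V_\gamma$. Thus $\{V_\gamma\}_{\gamma \in \Zz}$ is an increasing cover of $X$ by compact open sets.

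To conclude, apply the equivalence (\ref{ch-1})~$\Leftrightarrow$~(\ref{ch-2}) of Fact~\ref{char} to the exhaustion $\{V_\gamma\}$: $Y$ is definably compact iff $Y$ is closed in $X$ and $Y \subseteq V_\gamma$ for some $\gamma$. In that case $Y$ is a closed subset of the compact set $V_\gamma$, hence compact. Conversely, if $Y$ is compact, then $Y$ is closed in the Hausdorff space $X$, and it is contained in a single member of the increasing open cover $\{V_\gamma\}$ by compactness; so $Y$ is closed and bounded, hence definably compact. I do not expect a genuine obstacle here: the real content is the affine case, which is just Fact~\ref{base-situation} together with local compactness of $\Qq_p^n$, and the chart-by-chart $\Gamma$-exhaustion of Fact~\ref{gamma-tech} is exactly the tool that transports it to an arbitrary definable manifold; the only point needing a word of care is the (routine) observation that definable compactness of a $\Qq_p$-definable set is model-independent, which is what lets me reduce to the standard model at the very start.
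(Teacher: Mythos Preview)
The paper does not give its own proof of this statement: it is recorded as a Fact with a citation to \cite[Remark~2.12]{johnson-yao} and no argument is supplied, so there is nothing in the paper to compare your proof against.

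That said, your argument is correct and is essentially the intended one. The two ingredients are exactly the ones you identify: the affine Heine--Borel statement (Fact~\ref{base-situation} together with local compactness of $\Qq_p^n$), and the chart-wise $\Gamma$-exhaustion of Fact~\ref{gamma-tech}, which lets you build an exhaustion of $X(\Qq_p)$ by genuinely compact open sets and then invoke the closed-and-bounded characterization of Fact~\ref{char}. Your remark that definable compactness is intrinsic to the subspace topology (Definition~\ref{f-j-def}) is what makes the passage through the charts clean, and your opening observation that definable compactness of a $\Qq_p$-definable set is model-independent is harmless bookkeeping --- the paper records the same point in the Remark immediately following this Fact, using Definition~\ref{f-j-def} directly rather than Fact~\ref{char}.
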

\begin{remark}
  Suppose $M \preceq N$ are two models of $p$CF, and $X = X(M)$ is a
  definable topological space in $M$.  Let $X(N)$ denote the
  associated definable topological space in $N$.  One can easily show
  from Definition~\ref{f-j-def} that $X(M)$ is definably compact if
  and only if $X(N)$ is definably compact.  In other words,
  ``definable compactness'' is invariant under elmentary extensions.

  In particular, if $G = G(M)$ is a definable group in $M$, then
  $G(M)$ is definably compact in $M$ iff $G(N)$ is definably compact
  in $N$.
\end{remark}
Consequently, we can move to a monster model without changing whether
a definable group $G$ is definably compact.
\section{\textit{fsg} implies definable compactness} \label{sec:easy}
Work in a monster model $\Mm \models p$CF.
\begin{proposition} \label{easy-dir}
  Let $G$ be a definable group.  If $G$ has \textit{fsg}, then $G$ is
  definably compact.
\end{proposition}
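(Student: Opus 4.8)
The plan is to prove the contrapositive: if $G$ is not definably compact, then $G$ fails to have \textit{fsg}. By Fact~\ref{pillay-manifold} we may equip $G$ with its canonical definable manifold topology, and by Fact~\ref{gamma-tech}(\ref{gt-1}) we may fix a $\Gamma$-exhaustion $\{W_\gamma\}_{\gamma \in \Gamma}$ of $G$. Since $G$ itself is closed in $G$, the characterization in Fact~\ref{char} (equivalence of (\ref{ch-1}) and (\ref{ch-2})) tells us that $G$ being non-definably-compact means $G$ is \emph{not bounded}, i.e. $G \neq W_\gamma$ for every $\gamma$, so that $\{G \setminus W_\gamma : \gamma \in \Gamma\}$ is a definable, downward-directed family of nonempty sets with empty intersection.

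The heart of the argument is to show the sets $G \setminus W_\gamma$ are all generic, which contradicts parts (1)--(2) of Fact~\ref{hpp-fact}: by (2) the non-generic sets form an ideal, hence are closed under finite unions; since finitely many of the $G \setminus W_\gamma$ have union $G \setminus W_{\gamma_0}$ (for $\gamma_0$ the largest of the indices, using monotonicity of the exhaustion) which is generic (it is all of $G$ minus a "bounded" piece), at least one $G\setminus W_\gamma$ must be generic; I then want to bootstrap this to \emph{all} of them being generic and derive an absurdity. More directly: suppose for contradiction $G$ has \textit{fsg}, witnessed by $p$ and $M_0$. The complement $G \setminus W_\gamma$ is generic for every $\gamma$ — if it were non-generic for some $\gamma$, then by Fact~\ref{hpp-fact}(2) and the fact that $W_\gamma$ is definably compact (hence, intuitively, "small"), we would need $W_\gamma$ to be generic; but one should be able to rule this out, since a translate $g \cdot W_\gamma$ is again definably compact (translation is a definable homeomorphism, so Fact~\ref{k-facts}(1) applies), a finite union of definably compact sets is definably compact by Fact~\ref{k-facts}(2), and a definably compact subset of $G$ is bounded (contained in some $W_{\gamma'}$) by Fact~\ref{char} again — so finitely many translates of $W_\gamma$ cannot cover the unbounded set $G$. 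Hence every $G \setminus W_\gamma$ is generic.

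Now I use Fact~\ref{hpp-fact}(3): a definable set is generic iff every left translate meets $G(M_0)$. Apply this with the generic set $G \setminus W_\gamma$. Actually the cleanest route is to use the type $p$ itself: since $p$ is a generic type (every translate of every set in $p$ is generic — indeed $p$ is generic because its translates are finitely satisfiable, and by Fact~\ref{hpp-fact} generic sets are exactly those meeting every translate of $G(M_0)$), and since $G \setminus W_\gamma$ is generic and its complement $W_\gamma$ is not (as $W_\gamma$ is bounded, by the argument above it cannot be generic), the type $p$ must concentrate on $G \setminus W_\gamma$ for every $\gamma$. But then $p$ concentrates on $\bigcap_\gamma (G \setminus W_\gamma) = G \setminus \bigcup_\gamma W_\gamma = \varnothing$, which is impossible. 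This contradiction completes the proof.

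\textbf{Main obstacle.} The delicate point is the claim that a definably compact (equivalently, closed bounded) subset $D \subseteq G$ cannot be generic when $G$ is not definably compact: one must check that finitely many translates $g_1 D, \dots, g_n D$ cannot cover $G$, which rests on translates being definably compact (via Fact~\ref{k-facts}(1) and the homeomorphism property), finite unions of definably compact sets being definably compact (Fact~\ref{k-facts}(2)), and definably compact subsets being bounded — i.e. contained in some $W_{\gamma'}$ — so their union is still bounded and hence a proper subset of $G$. A secondary subtlety is confirming that the \textit{fsg}-witnessing type $p$ is generic and that a generic type must avoid every non-generic set; this follows from Fact~\ref{hpp-fact}, since the complement of a set in $p$ would have to be non-generic while $p$ finitely-satisfiable translates force genericity of everything in $p$. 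Once these pieces are in place the emptiness of $\bigcap_\gamma(G\setminus W_\gamma)$ gives the contradiction immediately.
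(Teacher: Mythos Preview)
Your argument has a genuine gap at the final step. From the fact that the generic type $p$ contains $G \setminus W_\gamma$ for every $\gamma \in \Gamma(\Mm)$ you conclude that ``$p$ concentrates on $\bigcap_\gamma (G \setminus W_\gamma) = \varnothing$, which is impossible.'' But this is not a contradiction: $p$ is a type \emph{over} the monster $\Mm$, realized only in a proper elementary extension, and the family $\{G \setminus W_\gamma : \gamma \in \Gamma(\Mm)\}$ is downward-directed with each member nonempty, so it is perfectly consistent. (Concretely, a realization of $p$ lies in $W_{\gamma'}$ for some $\gamma'$ in the value group of the extension, just not for any $\gamma \in \Gamma(\Mm)$.) A type containing a small family of formulas with empty intersection in $\Mm$ is nothing unusual --- this is exactly what happens for any type ``at infinity.''

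The missing ingredient is the one piece of the \textit{fsg} hypothesis you never actually deploy: finite satisfiability in the small model $M_0$. Since each $G \setminus W_\gamma$ lies in $p$ and $p$ is finitely satisfiable in $M_0$, each $G \setminus W_\gamma$ meets $G(M_0)$; equivalently $G(M_0) \not\subseteq W_\gamma$ for every $\gamma \in \Gamma(\Mm)$. But $G(M_0)$ is small and the $W_\gamma$ cover $G$, so by saturation some single $W_{\gamma_0}$ contains $G(M_0)$ --- contradiction. Once you insert this step your proof is correct. Note, however, that this saturation-and-smallness step is exactly the heart of the paper's (much shorter) direct argument: the paper starts by observing $G(M_0) \subseteq W_{\gamma_0}$, deduces via Fact~\ref{hpp-fact}(3) that $G \setminus W_{\gamma_0}$ is non-generic, hence $W_{\gamma_0}$ is generic, hence $G$ is a finite union of translates of $W_{\gamma_0}$ and therefore definably compact by Fact~\ref{k-facts}. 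Your correct computation that $W_\gamma$ is non-generic (via Fact~\ref{k-facts} and boundedness) is simply the contrapositive of that last step, so the two proofs use the same ingredients in reverse order.
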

\begin{proof}
  Take $p \in S_G(\Mm)$ and a small model $M_0 \preceq \Mm$ witnessing
  \textit{fsg}.  Take a $\Gamma$-exhaustion $\{W_\gamma\}_{\gamma \in
    \Gamma}$.  By definition, $G = \bigcup_{\gamma \in \Gamma}
  W_\gamma$.  The set $G(M_0)$ is small, so by saturation there is
  $\gamma \in \Gamma = \Gamma(\Mm)$ such that $G(M_0) \subseteq
  W_{\gamma}$.  Let $D = W_{\gamma}$ and $D'$ be the complement $G
  \setminus D$.  Then $D' \cap G(M_0) = \varnothing$.  By
  Fact~\ref{hpp-fact}, the definable set $D'$ is not generic, and
  therefore $D$ is generic.  Then finitely many left translates of $D$
  cover $G$:
  \begin{equation*}
    G = a_1 \cdot D \cup \cdots \cup a_k \cdot D.
  \end{equation*}
  The maps $x \mapsto a_i \cdot x$ are continuous, so by
  Fact~\ref{k-facts}, $G$ is definably compact.
\end{proof}

\section{Definable compactness is definable in families} \label{sec:definable}
Work in a monster model $\Mm \models p$CF.
\begin{proposition} \label{prop-annoying}
  Let $\{G_i\}_{i \in I}$ be a definable family of definable groups.
  Then the set
  \begin{equation*}
    \{i \in I : G_i \text{ is definably compact}\}
  \end{equation*}
  is definable.
\end{proposition}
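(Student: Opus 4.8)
The plan is to reduce definable compactness of each $G_i$ to a first-order condition by exploiting the characterization in Fact~\ref{char}, specifically the equivalence of (\ref{ch-1}) and (\ref{ch-3}) — the sequential-compactness-style criterion about cluster points of definable continuous functions $f : \Oo \setminus \{0\} \to G_i$. The key difficulty is that "definably compact" as stated in Definition~\ref{f-j-def} quantifies over all definable families of closed subsets, which is a second-order condition; the whole point of Fact~\ref{char} is that on a manifold this collapses to something manageable, but we must make the collapse *uniform* in $i$.

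So let me think about the steps.

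First step: get a uniform manifold structure and a uniform $\Gamma$-exhaustion. Pillay's Fact~\ref{pillay-manifold} gives each $G_i$ a canonical manifold topology; I need this to vary definably in $i$. I should argue (or cite/adapt) that the construction of the group topology is uniform — the charts $U_1^i, \ldots, U_{k}^i$ covering $G_i$ and the homeomorphisms to open subsets of $\Mm^{n_i}$ can be chosen definably in $i$ (after possibly partitioning $I$ into finitely many definable pieces so that $k$, $n$, and the combinatorial type of the charts are constant on each piece). Then Fact~\ref{gamma-tech} gives a $\Gamma$-exhaustion $\{W^i_\gamma\}$ uniformly — on $\Mm^n$ the balls $B_{-\gamma}(0)$ work uniformly, and part (\ref{gt-2}) of that fact assembles them into an exhaustion of $G_i$ in a way that inspects the formulas only, hence uniformly in $i$.

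Second step: use condition (\ref{ch-2}) of Fact~\ref{char}. Given the uniform exhaustion, "$G_i$ is definably compact" is equivalent to "$G_i = W^i_\gamma$ for some $\gamma \in \Gamma$" (taking $D = G_i$, which is automatically closed in itself). But this is a statement of the form $\exists \gamma \in \Gamma \ \forall x \in G_i \ (x \in W^i_\gamma)$, which is first-order over the parameter defining the family, hence the set of such $i$ is definable.

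**The main obstacle** will be the first step: establishing that Pillay's group topology, the chart data, and consequently the $\Gamma$-exhaustion are genuinely uniform in a definable family. Pillay's theorem is stated for a single group, and one has to go back into its proof — or invoke a suitable uniform version — to see that the manifold atlas can be extracted by a formula with $i$ as a parameter. I expect this requires a compactness/definable-choice argument: for each $i$ the relevant data exists, it is coded by a tuple satisfying a fixed formula, and by partitioning $I$ into finitely many definable pieces (to fix discrete invariants like the number of charts and the ambient dimension) together with definable Skolem functions in $p$CF, one selects the data definably on each piece. Once uniformity of the atlas is in hand, the passage through Fact~\ref{gamma-tech} and the boundedness criterion (\ref{ch-2}) is purely formal, since those facts manipulate the defining formulas in an $i$-independent way.
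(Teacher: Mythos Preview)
Your approach is correct in outline but differs from the paper's, and there is a small internal inconsistency: you announce you will use the equivalence (\ref{ch-1})$\Leftrightarrow$(\ref{ch-3}) of Fact~\ref{char}, but your actual second step uses (\ref{ch-1})$\Leftrightarrow$(\ref{ch-2}) instead. The latter is the right choice and is also what the paper uses.

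The substantive difference is in how the atlas data is handled. You propose to make Pillay's manifold structure \emph{uniform in $i$}: partition $I$ into finitely many definable pieces (by compactness, to fix the number of charts and the dimension), then use definable Skolem functions in $p$CF to select chart data and hence a $\Gamma$-exhaustion $\{W^i_\gamma\}$ definably in $i$; finally ``$G_i$ definably compact'' becomes the first-order condition $\exists \gamma\,(G_i \subseteq W^i_\gamma)$. This works, but the uniformisation step is real work that you flag rather than carry out.

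The paper avoids this entirely. It never constructs a single atlas varying with $i$. Instead, it shows that both $X = \{i : G_i \text{ definably compact}\}$ and $I \setminus X$ are $\vee$-definable over a small model $M_0$, and concludes by saturation. Concretely: for a fixed $i_0 \in X$, pick any atlas and $\Gamma$-exhaustion for $G_{i_0}$, coded by some tuple $b_0$; write a formula $\phi(i,b)$ asserting that $b$ codes a valid atlas and $\Gamma$-exhaustion for $G_i$ \emph{and} that $G_i$ is bounded with respect to it. Then $\{i : \exists b\,\phi(i,b)\}$ is an $M_0$-definable set containing $i_0$ and contained in $X$. The point is that the atlas parameter $b$ is existentially quantified rather than chosen as a function of $i$, so no uniform selection is needed.

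Your route buys a direct, single-formula description of $X$ at the cost of justifying uniform Pillay; the paper's route is shorter and sidesteps uniformity by trading it for a $\vee$-definability--plus--saturation argument.
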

If you stare carefully at Fact~\ref{base-situation},
Fact~\ref{gamma-tech}, and the equivalence between (\ref{ch-1}) and
(\ref{ch-2}) in Fact~\ref{char}, you can convince yourself that this
is automatically true.  But we include the details for completeness.
\begin{proof}
  Let $n$ be the dimension of $G$.  Take a small model $M_0$ defining
  the family $\{G_i\}$.  Let $X = \{i \in I : G_i \text{ is definably
    compact}\}$.  It suffices to show that both $X$ and $I \setminus
  X$ are $\vee$-definable, i.e., unions of $M_0$-definable sets.  We
  consider $X$; the proof for $I \setminus X$ is similar.

  Take some $i_0 \in X$, so that $G_{i_0}$ is definably compact.  Fix
  the following data:
  \begin{enumerate}
  \item \label{dat-1} Finitely many open definable sets $U_1, \ldots, U_k$ covering
    $G_{i_0}$.
  \item \label{dat-2} Open definable sets $V_j \subseteq \Mm^n$ for $j \le k$ and
    definable homeomorphisms $h_j : U_j \to V_j$.
  \item \label{dat-3} For each $j$, a $\Gamma$-exhaustion
    $\{W^j_{\gamma}\}_{\gamma \in \Gamma}$ of $U_j$.
  \end{enumerate}
  These exist by Facts~\ref{pillay-manifold} and
  \ref{gamma-tech}(\ref{gt-2}).  Take a finite tuple $b_0 \in
  \Mm^{\ell}$ over which the data in (\ref{dat-1})--(\ref{dat-3}) are
  definable.  We can define some $U_1^b$ for $b \in \Mm^{\ell}$ such
  that $U_1^b$ depends 0-definably on $b$, and $U_1 = U_1^{b_0}$.
  Define $U_j^b, V_j^b, h^b_j$ and $\{W^{j,b}_\gamma\}_{\gamma \in
    \Gamma}$ for $1 \le j \le k$ in a similar fashion.

  There is an $\mathcal{L}_{M_0}$ formula $\phi(x,y)$ such that
  $\phi(i,b)$ holds if and only if the following eight conditions
  hold:
  \begin{itemize}
  \item $i \in I$.  (This can be expressed because $I$ is $M_0$-definable.) 
  \item Each $U^b_j$ is a subset of $G_i$, and $G_i = \bigcup_{j =
    1}^k U^b_j$.
  \item Each $V^b_j$ is an open subset of $\Mm^n$.
  \item Each $h^b_j$ is a bijection from $U^b_j$ to $V^b_j$.
  \item The collection of $h^b_j : U^b_j \to V^b_j$ for $j = 1,
    \ldots, k$ is an atlas making $G_i$ into a Hausdorff definable
    manifold.
  \item The group operations on $G_i$ are continuous with respect to
    the definable manifold structure.
  \item Each $\{W^{j,b}_\gamma\}_{\gamma \in \Gamma}$ is a
    $\Gamma$-exhaustion on $U^b_j$.  (In order to express that
    $W^{j,b}_\gamma$ is definably compact, use the homeomorphism
    $h^b_j : U^b_j \to V^b_j$.  Fact~\ref{base-situation} shows how to
    express definable compactness for definable subsets of $V^b_j$.)
  \item If we let $\tilde{W}^b_\gamma = \bigcup_{j = 1}^k
    W^{j,b}_\gamma$, then there is some\footnote{For $I \setminus
    X$, change ``some'' to ``no''.} $\gamma$ such that
    $\tilde{W}^b_\gamma = G_i$.
  \end{itemize}
  \begin{claim}
    $\phi(i_0,b_0)$ holds
  \end{claim}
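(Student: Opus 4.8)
The plan is to verify that each of the eight bulleted conditions defining $\phi$ holds when we substitute $i = i_0$ and $b = b_0$. This is essentially an unwinding of definitions: the whole point of the construction was to choose $b_0$ as a tuple of parameters over which the data in (\ref{dat-1})--(\ref{dat-3}) is definable, and to define $U_j^b$, $V_j^b$, $h_j^b$, $\{W^{j,b}_\gamma\}$ so that specializing $b$ to $b_0$ recovers exactly $U_j$, $V_j$, $h_j$, $\{W^j_\gamma\}$.

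First I would observe that $i_0 \in I$ by assumption. Next, the $U_j = U_j^{b_0}$ cover $G_{i_0}$ and each is a subset of $G_{i_0}$ by the choice in (\ref{dat-1}); the $V_j = V_j^{b_0}$ are open in $\Mm^n$ and the $h_j = h_j^{b_0}$ are definable homeomorphisms $U_j \to V_j$ by (\ref{dat-2}); and by Fact~\ref{pillay-manifold} the collection $\{h_j\}$ is an atlas making $G_{i_0}$ a Hausdorff definable manifold with continuous group operations, since this is precisely the manifold topology furnished by that theorem. The seventh condition, that each $\{W^{j,b_0}_\gamma\}$ is a $\Gamma$-exhaustion of $U_j^{b_0}$, holds by the choice in (\ref{dat-3}).

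The only condition requiring a small argument is the last one: we need some $\gamma$ with $\tilde W^{b_0}_\gamma = \bigcup_{j=1}^k W^{j,b_0}_\gamma = G_{i_0}$. By Fact~\ref{gamma-tech}(\ref{gt-2}), the family $\{\tilde W^{b_0}_\gamma\}$ is a $\Gamma$-exhaustion of $G_{i_0}$, and in particular $\bigcup_{\gamma} \tilde W^{b_0}_\gamma = G_{i_0}$ with the $\tilde W^{b_0}_\gamma$ increasing in $\gamma$. Since $G_{i_0}$ is definably compact, it is bounded, so by the equivalence of (\ref{ch-1}) and (\ref{ch-2}) in Fact~\ref{char} (applied with the $\Gamma$-exhaustion $\{\tilde W^{b_0}_\gamma\}$), there is a single $\gamma$ with $G_{i_0} \subseteq \tilde W^{b_0}_\gamma$; as $\tilde W^{b_0}_\gamma \subseteq G_{i_0}$ always, equality follows.

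I do not expect any genuine obstacle here — the claim is bookkeeping, inserted only for completeness. The one place to be slightly careful is making sure the translation of each semantic condition into a first-order formula (hidden inside the definition of $\phi$) genuinely captures the intended meaning, in particular the clause expressing definable compactness of the $W^{j,b}_\gamma$ via the homeomorphism $h_j^b$ onto a subset of $\Mm^n$ together with Fact~\ref{base-situation}; but for the claim itself all that matters is that these conditions are true at $(i_0, b_0)$, which they are by construction.
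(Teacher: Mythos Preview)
Your proposal is correct and follows essentially the same approach as the paper: the paper dismisses the first seven bullet points as clear and, for the eighth, notes that $\{\tilde W^{b_0}_\gamma\}$ is a $\Gamma$-exhaustion of $G_{i_0}$ and then invokes the equivalence of (\ref{ch-1}) and (\ref{ch-2}) in Fact~\ref{char} to get a $\gamma$ with $G_{i_0} \subseteq \tilde W^{b_0}_\gamma$. Your write-up is simply more explicit about the first seven conditions, and your citation of Fact~\ref{gamma-tech}(\ref{gt-2}) for the combined exhaustion is the right one.
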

  \begin{claimproof}
    All the bullet points are clear except the last one.  Note that
    $\{\tilde{W}^{b_0}_\gamma\}$ is a $\Gamma$-exhaustion of $G_{i_0}$
    by Fact~\ref{gamma-tech}(\ref{gt-1}).  Then there is\footnote{For
    $I \setminus X$, insert ``not''.}  $\gamma$ such that $G_{i_0}
    \subseteq \tilde{W}^{b_0}_\gamma$, by the equivalence of parts
    (\ref{ch-1}) and (\ref{ch-2}) in Fact~\ref{char}.
  \end{claimproof}
  \begin{claim}
    If $\phi(i,b)$ holds, then $G_i$ is definably compact.
  \end{claim}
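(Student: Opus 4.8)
The plan is to unwind the eight bullet points in the definition of $\phi(i,b)$ and observe that they literally say that $G_i$, equipped with the atlas $\{h^b_j : U^b_j \to V^b_j\}$, is a Hausdorff definable manifold with continuous group operations, and that this manifold is bounded with respect to the $\Gamma$-exhaustion $\{\tilde W^b_\gamma\}$. We then invoke the uniqueness part of Fact~\ref{pillay-manifold} to identify this manifold structure with the canonical group topology $\tau$, and apply the equivalence of (\ref{ch-1}) and (\ref{ch-2}) in Fact~\ref{char} to conclude definable compactness of $G_i = G_i$.

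In more detail, first I would note that bullets 2--6 of $\phi$ assert exactly that the charts $h^b_j$ form an atlas exhibiting $G_i$ as a Hausdorff definable manifold on which multiplication and inversion are continuous; so by the uniqueness clause of Fact~\ref{pillay-manifold}, the topology induced by this atlas is \emph{the} canonical definable group topology $\tau$ on $G_i$. Next, bullet 7 says each $\{W^{j,b}_\gamma\}_\gamma$ is a $\Gamma$-exhaustion of $U^b_j$; here one uses the homeomorphism $h^b_j : U^b_j \to V^b_j \subseteq \Mm^n$ together with Fact~\ref{base-situation} to make sense of (and verify) definable compactness of each $W^{j,b}_\gamma$ inside the chart. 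By Fact~\ref{gamma-tech}(\ref{gt-2}) the family $\tilde W^b_\gamma = \bigcup_{j=1}^k W^{j,b}_\gamma$ is then a $\Gamma$-exhaustion of $G_i$. Finally, bullet 8 gives some $\gamma$ with $\tilde W^b_\gamma = G_i$; in particular $G_i \subseteq \tilde W^b_\gamma$, so $G_i$ is bounded with respect to this $\Gamma$-exhaustion, and it is trivially closed in itself. By the equivalence of (\ref{ch-1}) and (\ref{ch-2}) in Fact~\ref{char}, $G_i$ is definably compact.

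I do not expect any genuine obstacle here: the formula $\phi$ was constructed precisely so that this implication is a matter of reading off the bullet points and quoting the facts already assembled. The one point requiring a little care is the appeal to Fact~\ref{pillay-manifold}: the facts about $\Gamma$-exhaustions and the characterization in Fact~\ref{char} are stated for definable manifolds in general, but we want to conclude definable compactness of $G_i$ \emph{as a definable group}, i.e., with respect to the topology $\tau$ of Fact~\ref{pillay-manifold}; the uniqueness clause of that fact is exactly what bridges the gap, since it forces the atlas-topology to coincide with $\tau$. (Strictly, Fact~\ref{char} only needs the manifold structure, not the group structure, so even without uniqueness one gets that the \emph{topological space} $G_i$ is definably compact; uniqueness just certifies that this is the right topology.)
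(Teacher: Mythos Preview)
Your proposal is correct and follows essentially the same route as the paper: use bullets 2--7 and Fact~\ref{gamma-tech}(\ref{gt-2}) to see that $\{\tilde W^b_\gamma\}$ is a $\Gamma$-exhaustion of $G_i$, then use bullet 8 to conclude definable compactness. The only minor differences are that the paper concludes directly from $G_i = \tilde W^b_\gamma$ (a term of a $\Gamma$-exhaustion is definably compact by definition) rather than detouring through the closed-and-bounded criterion of Fact~\ref{char}, and the paper does not explicitly invoke the uniqueness clause of Fact~\ref{pillay-manifold}---your remark that this is what identifies the atlas topology with the canonical group topology is a point the paper leaves implicit.
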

  \begin{claimproof}
    The definition of $\phi$ ensures that $i \in I$ and the sets
    $U^b_1, \ldots, U^b_k$ are an open cover of $G_i$.  The family
    $\{\tilde{W}^b_\gamma\}_{\gamma \in \Gamma}$ appearing in the eighth bullet
    point is a $\Gamma$-exhaustion of $G_i$, by
    Fact~\ref{gamma-tech}(\ref{gt-2}).  The eighth bullet point then
    ensures that $G_i$ is\footnote{For $I \setminus X$, insert
    ``not''.} definably compact.
  \end{claimproof}
  Combining the two claims, we see that
  \begin{equation*}
    i_0 \in \{i \in I \mid \exists b \in \Mm^\ell : \phi(i,b)\} \subseteq X.
  \end{equation*}
  So there is an $M_0$-definable set containing $i_0$ and contained in
  $X$.  As $i_0$ is an arbitrary element of $X$, it follows that $X$
  is a union of $M_0$-definable sets.

  A nearly identical argument shows that $I \setminus X$ is a union of
  $M_0$-definable sets.  By saturation, $X$ is definable.
\end{proof}
\begin{corollary} \label{duh-cor}
  Let $G$ be a definably compact definable group.  Then there is a
  0-definable family $\{G_i\}_{i \in I}$ such that $G = G_i$ for some
  $i$, and every $G_i$ is definably compact.
\end{corollary}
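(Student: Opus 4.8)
The plan is to take an arbitrary definition of $G$, spread it out into a 0-definable family of definable groups, and then cut down to the definably compact members using Proposition~\ref{prop-annoying}. Concretely, I would fix a formula $\phi(x;y)$ and a parameter $b$ with $G = \phi(\Mm;b)$, together with formulas presenting the group operation and the inverse on $G$ in terms of $b$, and then let $B$ be the 0-definable set of all parameters $b'$ for which these formulas define a group structure on $\phi(\Mm;b')$. This yields a 0-definable family $\{G_{b'}\}_{b' \in B}$ of definable groups with $G = G_b$. Setting $I = \{b' \in B : G_{b'} \text{ is definably compact}\}$, Proposition~\ref{prop-annoying} tells us that $I$ is definable, and since $G$ itself is definably compact we have $b \in I$. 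So, modulo knowing that $I$ is actually 0-definable, the family $\{G_i\}_{i \in I}$ does the job.

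The remaining point, and the one I expect to carry the real content, is that $I$ is not merely definable but 0-definable. I would deduce this from the fact that $I$ is invariant under $\Aut(\Mm)$, combined with the standard observation that a definable set invariant under all automorphisms of a monster model is 0-definable (by compactness together with strong homogeneity). To check the invariance, fix $\sigma \in \Aut(\Mm)$ and $b' \in B$: then $\sigma(b') \in B$, and $\sigma$ restricts to a group isomorphism $G_{b'} \to G_{\sigma(b')}$. The one subtlety is topological: $\sigma$ transports the canonical definable manifold topology on $G_{b'}$ from Fact~\ref{pillay-manifold} to a definable manifold topology on $G_{\sigma(b')}$ with respect to which the group operations are continuous, so by the uniqueness clause of Fact~\ref{pillay-manifold} this transported topology is exactly the canonical topology on $G_{\sigma(b')}$. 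Hence $\sigma$ matches up the definable families of non-empty closed subsets of the two groups, preserving downward-directedness and non-emptiness of intersections; by Definition~\ref{f-j-def}, $G_{b'}$ is definably compact if and only if $G_{\sigma(b')}$ is. Thus $I$ is $\Aut(\Mm)$-invariant, hence 0-definable, and the corollary follows.

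I would expect the main obstacle to be essentially bookkeeping rather than mathematics: one must be careful that a ``definable family of definable groups'' really is being produced 0-definably (that is, that the group operation, not just the underlying set, varies 0-definably with the parameter), and one must invoke the correct form of the ``invariant plus definable implies 0-definable'' lemma. There is no analytic or geometric difficulty here, since all the substantive work---that definable compactness of $G_i$ is a definable condition on $i$---has already been carried out in Proposition~\ref{prop-annoying}, and the invariance of definable compactness under $\sigma$ is immediate once one recalls that the manifold topology of Fact~\ref{pillay-manifold} is canonical.
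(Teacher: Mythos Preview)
Your proposal is correct and follows essentially the same route as the paper: embed $G$ in a 0-definable family of definable groups, cut down to the definably compact members using Proposition~\ref{prop-annoying}, and conclude 0-definability of the resulting index set from $\Aut(\Mm)$-invariance plus definability. The paper simply asserts the invariance, whereas you spell out the (correct) reason via the uniqueness clause of Fact~\ref{pillay-manifold}; otherwise the arguments coincide.
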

\begin{proof}
  We can always find some 0-definable family of definable groups
  $\{G_i\}_{i \in J}$ containing $G$.  Let $I$ be the set of $i \in J$
  such that $G_i$ is definably compact.  Then $I$ is
  $\Aut(\Mm/\varnothing)$-invariant, and definable by
  Proposition~\ref{prop-annoying}.  Consequently, $I$ is 0-definable.
  Then the family $\{G_i\}_{i \in I}$ is 0-definable and contains $G$.
\end{proof}

\section{Uniform witnesses to \textit{fsg}} \label{sec:hard}
If $\phi(x,y)$ is an $\mathcal{L}_{rings}$-formula, then the
\emph{VC-dimension} of $\phi(x,y)$ will denote the VC-dimension of
$\phi(x,y)$ in $p$CF, i.e., the largest $n$ such that there is $M
\models p$CF and a set $\{a_1, \ldots, a_n\} \in M^{|x|}$ shattered by
$\phi$, meaning that for any $S \subseteq \{a_1,\ldots,a_n\}$ there is
$b \in M^{|y|}$ such that
\begin{equation*}
  S = \{a_1,\ldots,a_n\} \cap \phi(M,b).
\end{equation*}
The VC-dimension of $\phi$ is always finite, because $p$CF is NIP.

Work in the standard model $\Qq_p \models p$CF.  For definable groups
$G$, compactness is equivalent to definable compactness
(Fact~\ref{standard-compare}).  Any compact definable group $G$ has a
Haar measure $\mu = \mu_G$, which we normalize to make $\mu(G) = 1$.
Any definable set is Borel, hence measurable.
\begin{proposition}\label{main-target}
  Let $G$ be a compact definable group.  Let $\mu$ be normalized Haar
  measure on $G$.  Let $\phi(x,y)$ be a formula.  Let $\varepsilon >
  0$.  There is $\{a_1,\ldots,a_N\} \in G$ such that for any
  $\phi$-set $D$ contained in $G$,
  \begin{equation*}
    \mu(D) > \varepsilon \implies D \cap \{a_1,\ldots,a_N\} \ne \varnothing.
  \end{equation*}
  Moreover, $N$ can be chosen to depend only on the VC-dimension of
  $\phi$ and on $\varepsilon$, \emph{not on $G$}.
\end{proposition}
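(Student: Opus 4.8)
The plan is to deduce this from the VC-theorem (the $\varepsilon$-net theorem) applied to the measure space $(G, \mu)$. Recall the standard fact: if $\mathcal{F}$ is a set system on a probability space $(X, \nu)$ with VC-dimension $\le d$, then for any $\varepsilon > 0$ there is $N = N(d, \varepsilon)$ and a finite set $\{a_1, \dots, a_N\} \subseteq X$ (in fact a random sample of size $N$ works with positive probability) which is an \emph{$\varepsilon$-net}: every set $S \in \mathcal{F}$ with $\nu(S) > \varepsilon$ contains some $a_i$. Here I would take $X = G$, $\nu = \mu$ the normalized Haar measure, and $\mathcal{F}$ the family of all $\phi$-subsets of $G$, i.e. all sets of the form $\phi(G, b) \cap G$ for $b \in \Qq_p^{|y|}$. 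The VC-dimension of $\mathcal{F}$ is at most the VC-dimension of $\phi(x,y)$ in $p$CF, which is finite since $p$CF is NIP; call it $d$. Applying the $\varepsilon$-net theorem produces the desired $\{a_1, \dots, a_N\}$ with $N = N(d, \varepsilon)$, and this $N$ manifestly depends only on $d$ and $\varepsilon$, not on $G$ — which is exactly the ``moreover'' clause.

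A few technical points need checking. First, I should confirm that the $\phi$-sets actually form a legitimate set system of finite VC-dimension: a finite subset $\{a_1, \dots, a_n\} \subseteq G$ is shattered by $\mathcal{F}$ iff it is shattered by $\phi$ in the model-theoretic sense, so $\mathrm{VC}(\mathcal{F}) \le \mathrm{VC}(\phi) = d < \infty$. (The fact that we restrict the parameter $b$ to range only over tuples for which $\phi(G,b) \subseteq G$, rather than all of $\Qq_p^{|y|}$, only shrinks the family, so this is harmless.) Second, I should note that every $\phi$-set is Borel, hence $\mu$-measurable, so the hypothesis ``$\mu(D) > \varepsilon$'' makes sense; this is already observed in the paragraph before the proposition. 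Third, the $\varepsilon$-net theorem in the form I want requires a probability measure — that is why the normalization $\mu(G) = 1$ matters.

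The one genuinely delicate point is the measurability hypothesis in the VC/$\varepsilon$-net theorem. The classical statement (Haussler–Welzl, Vapnik–Chervonenkis) is usually phrased for finite set systems, or requires a mild ``well-behaved'' / measurability condition on $\mathcal{F}$ to make the probabilistic argument (sampling $N$ points i.i.d.\ from $\mu$ and estimating the probability that the sample fails to be an $\varepsilon$-net) go through. In our setting $\mathcal{F}$ is a definable family in an NIP theory over $\Qq_p$, and the relevant events are Borel (uniformly definable families of Borel sets behave well), so the standard measurability conditions are satisfied; I would cite the appropriate version of the theorem (e.g.\ from Matoušek's or the Simon NIP-book treatment) and remark that definability takes care of measurability. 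This is where I expect to spend the most care, though it is routine rather than hard.

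The proof is then just: fix $d = \mathrm{VC}(\phi)$, let $N = N(d,\varepsilon)$ be the bound from the $\varepsilon$-net theorem, apply that theorem to $(G, \mu, \mathcal{F})$ to get $\{a_1, \dots, a_N\} \subseteq G$, and observe that for any $\phi$-set $D \subseteq G$ with $\mu(D) > \varepsilon$ we have $D \cap \{a_1, \dots, a_N\} \ne \varnothing$ by the $\varepsilon$-net property. Since $N$ was chosen before $G$ entered the picture, it depends only on $d$ and $\varepsilon$.
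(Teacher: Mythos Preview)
Your proposal is correct and is essentially the same approach as the paper's: both deduce the statement from the VC-theorem applied to Haar measure, with the uniform bound on $N$ coming from the fact that the $\varepsilon$-net size depends only on the VC-dimension and $\varepsilon$. The only difference is packaging: you invoke the $\varepsilon$-net theorem as a black box (noting the measurability side condition), whereas the paper, lacking a reference for the exact statement needed, unpacks the standard symmetrization-plus-Fubini-plus-weak-law argument explicitly, following \cite[Proposition~7.26]{NIPguide}.
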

This is essentially a direct consequence of the VC-theorem, but we
include the details for lack of a reference that proves the exact
statement we want.  We will closely follow the argument from
\cite[Proposition~7.26]{NIPguide}.
\begin{proof}
  We will need some notation from \cite{NIPguide}.  If $a_1, \ldots,
  a_n \in G$ and $S \subseteq G$, then
  \begin{equation*}
    \Av(a_1,\ldots,a_n;S) = \frac{|\{i : a_i \in S\}|}{n}
  \end{equation*}
  If $a_1,\ldots,a_n,b_1,\ldots,b_n \in G$, then
  \begin{equation*}
    f_n(a_1,\ldots,a_n;b_1,\ldots,b_n) = \sup_{c \in \Qq_p^{|y|}} |\Av(a_1,\ldots,a_n;\phi(\Qq_p;c)) - \Av(b_1,\ldots,b_n;\phi(\Qq_p;c))|.
  \end{equation*}
  The following is trivial:
  \begin{claim} \label{first-claim}
    Suppose $D$ is a $\phi$-set such that $D \cap \{a_1,\ldots,a_n\} = \varnothing \ne D \cap \{b_1,\ldots,b_n\}$.  Then
    \begin{equation*}
      f_n(a_1,\ldots,a_n;b_1,\ldots,b_n) \ge |\Av(a_1,\ldots,a_n;D) - \Av(b_1,\ldots,b_n;D)| = \Av(b_1,\ldots,b_n;D).
    \end{equation*}
  \end{claim}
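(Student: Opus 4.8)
The plan is to unwind the definitions of $\Av$ and $f_n$ and observe that the two hypotheses completely pin down the relevant averages. First I would record that since $D$ is a $\phi$-set, we may write $D = \phi(\Qq_p;c_0)$ for some parameter $c_0 \in \Qq_p^{|y|}$; this matters only because it certifies that $D$ genuinely occurs in the family of sets over which $f_n$ takes its supremum. From the hypothesis $D \cap \{a_1,\ldots,a_n\} = \varnothing$ we get that no $a_i$ lies in $D$, so $\Av(a_1,\ldots,a_n;D) = 0$ by definition. From the hypothesis $D \cap \{b_1,\ldots,b_n\} \ne \varnothing$ we get that at least one $b_i$ lies in $D$, so $\Av(b_1,\ldots,b_n;D) \ge 1/n > 0$; in particular it is nonnegative.

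Next I would compute directly that $|\Av(a_1,\ldots,a_n;D) - \Av(b_1,\ldots,b_n;D)| = |0 - \Av(b_1,\ldots,b_n;D)| = \Av(b_1,\ldots,b_n;D)$, which is the claimed equality. For the inequality, recall that $f_n(a_1,\ldots,a_n;b_1,\ldots,b_n)$ is by definition the supremum over $c \in \Qq_p^{|y|}$ of $|\Av(a_1,\ldots,a_n;\phi(\Qq_p;c)) - \Av(b_1,\ldots,b_n;\phi(\Qq_p;c))|$. Taking $c = c_0$ shows this supremum is at least $|\Av(a_1,\ldots,a_n;D) - \Av(b_1,\ldots,b_n;D)|$. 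Chaining this with the equality just established yields $f_n(a_1,\ldots,a_n;b_1,\ldots,b_n) \ge \Av(b_1,\ldots,b_n;D)$, as desired.

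There is no genuine obstacle here: the statement is purely definitional. The only point requiring a moment of care is the one flagged above — that being a $\phi$-set means being of the form $\phi(\Qq_p;c)$ for some $c$, so that $D$ is legitimately one of the sets indexed by the supremum in the definition of $f_n$; without that, the first inequality would not follow.
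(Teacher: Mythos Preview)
Your proof is correct and is exactly the intended argument; the paper simply labels the claim ``trivial'' and omits the details you have supplied. One very minor remark: the hypothesis $D \cap \{b_1,\ldots,b_n\} \ne \varnothing$ is not actually needed for the displayed equality, since $\Av$ is nonnegative by definition regardless, but this does no harm.
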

  Let $k$ be the VC-dimension of $\phi$.  By the Sauer-Shelah Lemma
  \cite[Lemma~6.4]{NIPguide}, the shatter function $\pi_{\phi}(n)$ is
  bounded by $\sum_{i = 0}^k \binom{n}{i}$.  (See \cite[Section
    6.1]{NIPguide} for the definition of the shatter function.)  Let
  $\delta = \varepsilon/2$.  Let $N$ be large enough that $1/(N
  \varepsilon^2) < 1 - \delta$ and
  \begin{equation*}
    4 \left(\sum_{i = 0}^k \binom{N}{i}\right) \exp \left( - \frac{N \delta^2}{8} \right) < \delta.
  \end{equation*}
  Note that $\delta$ and $N$ can be chosen to depend only on $k$ and
  $\varepsilon$.
  
  Applying \cite[Lemma 7.24]{NIPguide} to normalized Haar measure on
  $G^{2N}$, we see the following:
  \begin{claim}\label{second-claim}
    If $a_1, \ldots, a_N, b_1, \ldots, b_N$ are chosen randomly in $G$, then
    \begin{equation*}
      Prob(f_N(\bar{a},\bar{b}) > \delta) \le 4 \pi_\phi(N) \exp\left(-\frac{N \delta^2}{8}\right) < \delta.
    \end{equation*}
  \end{claim}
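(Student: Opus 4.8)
The plan is to read off Claim~\ref{second-claim} directly from the measure-theoretic symmetrization bound \cite[Lemma~7.24]{NIPguide}; the only work is in matching our situation to its hypotheses. First I would take the probability space to be $G^{2N}$ equipped with the $2N$-fold power of normalized Haar measure $\mu$ on $G$ (which, since $G$ is compact, is just normalized Haar measure on the compact group $G^{2N}$), and take the relevant family of subsets to be $\mathcal{C} = \{\, \phi(\Qq_p;c) \cap G : c \in \Qq_p^{|y|}\,\}$, the family of $\phi$-subsets of $G$. Every trace that $\mathcal{C}$ realizes on a finite subset of $G$ is also realized there by the family of all $\phi$-sets, so the shatter function of $\mathcal{C}$ is bounded by $\pi_\phi$, hence by $\sum_{i=0}^k\binom{n}{i}$ via Sauer--Shelah; this accounts for the factor $4\pi_\phi(N) \le 4\sum_{i=0}^k\binom{N}{i}$ in the asserted bound.

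Next I would check the measurability requirements of \cite[Lemma~7.24]{NIPguide}, in particular that $f_N \colon G^{2N} \to \Rr$ is Borel. For a fixed $(\bar a,\bar b) \in G^{2N}$, the quantity $|\Av(\bar a;\phi(\Qq_p;c)) - \Av(\bar b;\phi(\Qq_p;c))|$ depends only on the trace of $\phi(\Qq_p;c)$ on the $2N$ points $a_1,\dots,a_N,b_1,\dots,b_N$, and only finitely many traces occur as $c$ varies; so the supremum defining $f_N(\bar a,\bar b)$ is attained, and in fact $f_N$ equals the maximum, over the finitely many subsets $S$ of $\{1,\dots,2N\}$, of the explicit rational number $g(S) = \bigl|\,|S\cap\{1,\dots,N\}| - |S\cap\{N+1,\dots,2N\}|\,\bigr|/N$ on the set where $S$ is \emph{realizable}, i.e.\ where some $\phi(\Qq_p;c)$ cuts out exactly the points indexed by $S$. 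The latter is a definable, hence Borel, condition on $(\bar a,\bar b)$, a finite maximum of Borel functions is Borel, so $f_N$ is Borel; the same observation---that over $\Qq_p$ everything is controlled by finitely many definable, hence Borel, conditions---handles the remaining measurability clauses of the lemma. With the hypotheses in place, \cite[Lemma~7.24]{NIPguide} gives
\begin{equation*}
  Prob\bigl(f_N(\bar a,\bar b) > \delta\bigr) \;\le\; 4\pi_\phi(N)\exp\!\left(-\frac{N\delta^2}{8}\right),
\end{equation*}
and the strict inequality $4\pi_\phi(N)\exp(-N\delta^2/8) < \delta$ is precisely the condition imposed on $N$ above, using $\pi_\phi(N)\le\sum_{i=0}^k\binom{N}{i}$; this finishes the claim.

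I expect the main---indeed essentially the only---obstacle to be the bookkeeping around this measurability condition: \cite[Lemma~7.24]{NIPguide} is phrased for an abstract family of subsets of a probability space satisfying a measurability hypothesis, so one has to be slightly careful translating from the formula $\phi$ to the family $\mathcal{C}$ and confirming that the hypothesis survives. As sketched above this is routine---no genuine analysis is needed because over the standard model $\Qq_p$ the relevant combinatorial data is finite and definable---so the substance of the claim really is just \cite[Lemma~7.24]{NIPguide} applied verbatim, together with the Chernoff-type exponential bound already packaged into that lemma.
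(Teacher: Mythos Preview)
Your proposal is correct and follows the paper's approach exactly: the paper simply says ``Applying \cite[Lemma 7.24]{NIPguide} to normalized Haar measure on $G^{2N}$'' and states the claim, so you have supplied the details the paper omits (the measurability check and the shatter-function bound).
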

  By Fubini's theorem, we can fix some $a_1,\ldots,a_N$ such that the following holds:
  \begin{claim}\label{third-claim}
    If $b_1, \ldots, b_N$ are chosen randomly in $G$, then
    \begin{equation*}
      Prob(f_N(\bar{a},\bar{b}) > \delta) < \delta.
    \end{equation*}
  \end{claim}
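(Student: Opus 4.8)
The plan is to obtain Claim~\ref{third-claim} from Claim~\ref{second-claim} by an application of Fubini's theorem. Write $\nu$ for normalized Haar measure on $G^N$. Since $G$ is compact, the normalized Haar measure on $G^{2N} = G^N \times G^N$ to which Claim~\ref{second-claim} refers is exactly the product $\nu \otimes \nu$, and ``choosing $\bar a, \bar b$ randomly in $G$'' means choosing $(\bar a, \bar b) \in G^{2N}$ according to $\nu \otimes \nu$. Let $g : G^N \times G^N \to \{0,1\}$ be the indicator function of $\{(\bar a, \bar b) : f_N(\bar a, \bar b) > \delta\}$. Then Claim~\ref{second-claim} says precisely that $\int_{G^{2N}} g \, d(\nu \otimes \nu) < \delta$.

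Before invoking Fubini I would check that $g$ is Borel measurable; this is where the finite VC-dimension $k$ is used a second time. For a fixed tuple $(a_1,\dots,a_N) \in G^N$, the value $\Av(a_1,\dots,a_N;\phi(\Qq_p;c))$ depends on $c$ only through the trace $\{i : a_i \in \phi(\Qq_p;c)\}$, and by Sauer--Shelah at most $\sum_{i=0}^{k}\binom{N}{i}$ such traces occur as $c$ ranges over $\Qq_p^{|y|}$; likewise for $(b_1,\dots,b_N)$. Hence the supremum defining $f_N(\bar a,\bar b)$ is attained and equals the maximum, over the finitely many pairs $(S,T)$ of traces that are jointly realized by some $c$, of $\bigl| |S|/N - |T|/N \bigr|$. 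The condition ``$(S,T)$ is jointly realized by some $c$'' is a definable (hence Borel) condition on $(\bar a,\bar b)$, and $f_N$ is constant on the corresponding piece; so $f_N$, and therefore $g$, is a finite-valued Borel function on $G^{2N}$. Restricting to a fixed $\bar a$, it is also Borel as a function of $\bar b$.

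Now Tonelli's theorem applies to the nonnegative Borel function $g$, giving $\int_{G^N} h \, d\nu < \delta$, where $h(\bar a) := \int_{G^N} g(\bar a,\bar b)\, d\nu(\bar b)$ is the probability that $f_N(\bar a,\bar b) > \delta$ for a random $\bar b$. Since $h \ge 0$ and $\nu(G^N) = 1$, if we had $h(\bar a) \ge \delta$ for $\nu$-almost every $\bar a$ we would get $\int h \, d\nu \ge \delta$, a contradiction; hence $\{\bar a \in G^N : h(\bar a) < \delta\}$ has positive measure and in particular is nonempty. Fixing $\bar a$ in this set and renaming its coordinates $a_1,\dots,a_N$ yields Claim~\ref{third-claim}. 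The only delicate point is the measurability of $f_N$ needed to legitimize Fubini; I expect this to be handled either by the Sauer--Shelah observation above or by a citation to \cite{NIPguide}, where it is implicit in the proof of Proposition~7.26. Everything else is the standard ``average over a product, then fix one coordinate'' argument.
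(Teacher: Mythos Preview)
Your proposal is correct and follows exactly the paper's approach: the paper's entire proof is the single phrase ``By Fubini's theorem, we can fix some $a_1,\ldots,a_N$ such that the following holds,'' and you have spelled out precisely this argument. Your measurability check via Sauer--Shelah is a detail the paper omits (following \cite{NIPguide}), but it is correct and welcome.
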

  We claim that $a_1,\ldots,a_N$ have the desired property.
  Otherwise, there is some $\phi$-set $D$ such that $\mu(D) >
  \varepsilon$ but $D \cap \{a_1,\ldots,a_N\} = 0$.
  Combining~\ref{first-claim} and \ref{third-claim}, we see
  \begin{claim}\label{fourth-claim}
    If $b_1,\ldots,b_N$ are chosen randomly in $G$, then
    \begin{equation*}
      Prob(\Av(b_1,\ldots,b_N;D) > \delta) < \delta,
    \end{equation*}
  \end{claim}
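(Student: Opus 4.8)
The plan is to observe that Claim~\ref{fourth-claim} drops out immediately from the two claims already established, with no new probabilistic input. Throughout, fix the tuple $a_1,\ldots,a_N$ provided by Claim~\ref{third-claim} and the $\phi$-set $D$ from the paragraph before the claim, so that $\mu(D) > \varepsilon$ and $D \cap \{a_1,\ldots,a_N\} = \varnothing$. I would show that, as $\bar{b} = (b_1,\ldots,b_N)$ ranges over $G^N$, the event $\{\Av(\bar{b};D) > \delta\}$ is contained in the event $\{f_N(\bar{a},\bar{b}) > \delta\}$; the desired bound then follows by monotonicity of measure together with Claim~\ref{third-claim}.

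To prove the containment, suppose $\bar{b} \in G^N$ satisfies $\Av(b_1,\ldots,b_N;D) > \delta$. Since $\delta = \varepsilon/2 > 0$, this already forces $\Av(b_1,\ldots,b_N;D) > 0$, i.e.\ some $b_i$ lies in $D$, so $D \cap \{b_1,\ldots,b_N\} \ne \varnothing$. Combined with $D \cap \{a_1,\ldots,a_N\} = \varnothing$, the hypotheses of Claim~\ref{first-claim} (taking $n = N$) are satisfied, and that claim yields $f_N(\bar{a},\bar{b}) \ge \Av(b_1,\ldots,b_N;D) > \delta$. This gives the containment of events, whence
\begin{equation*}
  Prob(\Av(b_1,\ldots,b_N;D) > \delta) \le Prob(f_N(\bar{a},\bar{b}) > \delta) < \delta,
\end{equation*}
the last step being exactly Claim~\ref{third-claim}.

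I do not expect any real obstacle here; the only point worth a line is that the probabilities make sense, i.e.\ that $\bar{b} \mapsto \Av(\bar{b};D)$ is measurable on $G^N$ (because $D$ is Borel, as recalled just before Proposition~\ref{main-target}) and that $\bar{b} \mapsto f_N(\bar{a},\bar{b})$ is measurable (part of the apparatus imported from \cite{NIPguide}). After the claim, the proof of Proposition~\ref{main-target} will conclude by contradiction: a Chebyshev-type estimate, using $\mu(D) > \varepsilon = 2\delta$ and the choice $1/(N\varepsilon^2) < 1 - \delta$, shows $Prob(\Av(b_1,\ldots,b_N;D) \le \delta) < 1 - \delta$, i.e.\ $Prob(\Av(b_1,\ldots,b_N;D) > \delta) > \delta$, contradicting Claim~\ref{fourth-claim}. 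Hence no such $D$ exists and $a_1,\ldots,a_N$ has the required property.
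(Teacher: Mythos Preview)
Your argument is correct and matches the paper's approach: the text immediately preceding Claim~\ref{fourth-claim} simply says ``Combining~\ref{first-claim} and \ref{third-claim}, we see'', and you have spelled out that combination precisely. Your extra remarks on measurability and on how the proof of Proposition~\ref{main-target} concludes are accurate but go beyond what the paper records for this claim.
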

  Since $\delta = \varepsilon/2$ and $\mu(D) > \varepsilon$, the
  following implication holds for any $b_i$:
  \begin{equation*}
    \Av(b_1,\ldots,b_N;D) \le \delta \implies |\Av(b_1,\ldots,b_N;D) - \mu(D)| \ge \varepsilon/2.
  \end{equation*}
  The weak law of large numbers \cite[Proposition~B.4]{NIPguide} shows that for random $\bar{b} \in G^N$,
  \begin{equation*}
    Prob(\Av(\bar{b};D) \le \delta) \le Prob(|\Av(\bar{b};D) - \mu(D)| \ge \varepsilon/2) \le \frac{1}{N \varepsilon^2} < 1 - \delta.
  \end{equation*}
  The event $\Av(\bar{b};D) \le \delta$ has probability less than $1 -
  \delta$, and the event $\Av(\bar{b};D) > \delta$ has probability
  less than $\delta$ (by Claim~\ref{fourth-claim}).  This is absurd.
\end{proof}
\begin{remark}\label{notation}
  We will let $N_{k,\varepsilon}$ denote the $N$ in
  Proposition~\ref{main-target} that works uniformly across all
  $\phi$-sets $D \subseteq G$ with $\mu_G(D) > \varepsilon$ and $\phi$
  of VC-dimension $k$.
\end{remark}
\begin{remark}
  Proposition~\ref{main-target} can also be seen using facts about
  generically stable measures.  Embed $\Qq_p$ into a monster model
  $\Mm$.  By \cite[Theorem~6.3]{standard-measures}, the Haar measure
  $\mu$ on $G(\Qq_p)$ is \emph{smooth}, meaning that it has a unique
  extension to a global Keisler measure $\tilde{\mu}$ on $G(\Mm)$.  By
  \cite[Lemma~7.17(i)]{NIPguide}, $\tilde{\mu}$ is generically stable.
  By \cite[Theorem~7.29(ii)]{NIPguide}, for any formula $\phi$ and any
  $\varepsilon > 0$ there is $\{a_1,\ldots,a_N\} \in G(\Qq_p)$ such
  that for any $\Qq_p$-definable $\phi$-set $D \subseteq G$, we have
  \begin{equation*}
    \left|\tilde{\mu}(D) - \frac{|D \cap \{a_1,\ldots,a_n\}|}{n}\right| < \varepsilon.
  \end{equation*}
  This implies the conclusion of Proposition~\ref{main-target}.
  Tracing through the proofs, one can see that $n$ depends only on
  $\varepsilon$ and the VC-dimension of $\phi$.
\end{remark}

\section{Definable compactness implies \textit{fsg}} \label{sec:conclusion}

Let $\Qq_p^\dag$ be the expansion of $(\Qq_p,+,\cdot)$ by the following data:
\begin{enumerate}
\item A new sort $\Rr$ with its full field structure.
\item For every 0-definable family $\mathcal{F} = \{G_i\}_{i \in I}$
  of compact definable groups and every $\mathcal{L}_{rings}$-formula
  $\phi(x,y)$ a function $f_{\mathcal{F},\phi} : I \times \Qq_p^{|y|}
  \to \Rr$ defined by
  \begin{equation*}
    f_{\mathcal{F},\phi}(i,b) = \mu_{G_i}(\phi(\Qq_p;b) \cap G_i),
  \end{equation*}
  where $\mu_G$ denotes normalized Haar measure on $G_i$.
\end{enumerate}
Let $\Mm^\dag = (\Mm,\Rr^*)$ be a monster model of $\Qq_p^\dag$, and let $\Mm$ be
the reduct to $\mathcal{L}_{rings}$ (discarding the new sort $\Rr^*$).  Then
$\Mm$ is a monster model of $p$CF.
\begin{lemma} \label{ridiculemma}
  Let $\mathcal{F} = \{G_i\}_{i \in I}$ and $\mathcal{F}' =
  \{G'_i\}_{i \in J}$ be two 0-definable families of definably compact
  groups.  Let $\phi(x;y)$ and $\psi(x;z)$ be two $\mathcal{L}$-formulas.

  Suppose $i, j \in I(\Mm)$ are such that $G_i(\Mm) = G'_j(\Mm) =: G$
  and $G \cap \phi(\Mm;b) = G \cap \psi(\Mm;c)$.  Then
  \begin{equation}
    f_{\mathcal{F},\phi}(i,b) = f_{\mathcal{F}',\psi}(j,c). \label{stupid-identity}
  \end{equation}
\end{lemma}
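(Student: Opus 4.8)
The plan is to reduce the identity \eqref{stupid-identity} over the monster model $\Mm$ to the analogous identity over the standard model $\Qq_p$, where both sides are genuine Haar measures of honest $p$-adic compact groups and the claim becomes a statement about uniqueness of Haar measure. The key observation is that both sides of \eqref{stupid-identity} are controlled by a first-order condition. Specifically, consider the statement: ``whenever $i \in I$, $j \in J$, $G_i = G'_j$ as sets, and $G_i \cap \phi(\cdot;b) = G_i \cap \psi(\cdot;c)$ (all of which are expressible by $\mathcal{L}_{rings}$-formulas in the variables $i,j,b,c$, since the families and formulas are $0$-definable), then $f_{\mathcal{F},\phi}(i,b) = f_{\mathcal{F}',\psi}(j,c)$.'' This is an $\mathcal{L}^\dag$-sentence $\sigma$ (it quantifies over the $\Qq_p$-sorts and asserts equality of two $\Rr^*$-valued terms). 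Since $\Mm^\dag \models \Qq_p^\dag$, it suffices to verify $\sigma$ in $\Qq_p^\dag$ itself.

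So I would next work entirely in $\Qq_p^\dag$, i.e.\ the standard model. Fix $i \in I(\Qq_p)$, $j \in J(\Qq_p)$, $b,c$ over $\Qq_p$ with $G := G_i(\Qq_p) = G'_j(\Qq_p)$ and $G \cap \phi(\Qq_p;b) = G \cap \psi(\Qq_p;c) =: D$. By the definitions, $f_{\mathcal{F},\phi}(i,b) = \mu_{G_i}(D)$ and $f_{\mathcal{F}',\psi}(j,c) = \mu_{G'_j}(D)$, where $\mu_{G_i}$ and $\mu_{G'_j}$ denote the normalized Haar measures attached to the two \emph{presentations} of the same abstract compact group $G$. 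Now I invoke Fact~\ref{standard-compare}: since the $G_i$ (resp.\ $G'_j$) are definably compact $\Qq_p$-definable groups, the group $G$, with the $p$-adic topology coming from either manifold structure, is a compact $p$-adic Lie group (or at least a compact topological group), and in particular it has a unique normalized Haar measure. One must check that the two manifold topologies on $G$ coming from the two families agree — this follows from the uniqueness clause in Fact~\ref{pillay-manifold}, since the group topology is intrinsic to $G$ as a definable group and does not depend on the chosen presentation. Hence $\mu_{G_i} = \mu_{G'_j}$ as Borel measures on $G$, so they agree on the Borel set $D$, giving $f_{\mathcal{F},\phi}(i,b) = f_{\mathcal{F}',\psi}(j,c)$. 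This establishes $\sigma$ in $\Qq_p^\dag$, hence in $\Mm^\dag$, which is exactly \eqref{stupid-identity}.

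The main obstacle I anticipate is the bookkeeping needed to see that the relevant condition really is first-order over the pure ring language in the parameters $i,j,b,c$ — one needs that ``$G_i = G'_j$ as subsets of the ambient $\Mm^{n}$'' and ``$G_i \cap \phi(\cdot;b) = G_i \cap \psi(\cdot;c)$'' are $\mathcal{L}_{rings}$-expressible, which holds because $\mathcal{F}$, $\mathcal{F}'$, $\phi$, $\psi$ are all $0$-definable (or at least $\Qq_p$-definable) in $\mathcal{L}_{rings}$, and that the terms $f_{\mathcal{F},\phi}$, $f_{\mathcal{F}',\psi}$ are part of the signature of $\Qq_p^\dag$ by construction. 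Once the transfer is set up, the content is entirely the uniqueness of Haar measure on a compact group together with the uniqueness of the group topology from Fact~\ref{pillay-manifold}; there is no hard analysis. A minor point to address is that $\mu_G$ in item (2) of the definition of $\Qq_p^\dag$ is defined for each family separately, so one should be slightly careful that $f_{\mathcal{F},\phi}$ and $f_{\mathcal{F}',\psi}$ are a priori \emph{different} function symbols, and the whole point of the lemma is precisely that they cohere; this is why the argument cannot be purely definitional and genuinely needs the uniqueness of Haar measure.
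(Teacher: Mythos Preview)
Your proposal is correct and follows the same approach as the paper: express the claim as a single first-order sentence in the language of $\Qq_p^\dag$, verify it in the standard model, and transfer to $\Mm^\dag$ by elementarity. The paper's own proof is terser --- it simply asserts that the sentence ``holds in $\Qq_p^\dag$ by construction'' --- whereas you correctly unpack what that phrase hides: the Haar measure used in defining $f_{\mathcal{F},\phi}$ is attached to the group $G_i$ as a compact topological group, so one needs Fact~\ref{pillay-manifold} (uniqueness of the group topology) to see that $G_i$ and $G'_j$ carry the same topology when they agree as definable groups, and then uniqueness of normalized Haar measure to conclude $\mu_{G_i} = \mu_{G'_j}$. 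Your extra care here is warranted and not a deviation from the intended argument.
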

\begin{proof}
  For fixed $\mathcal{F}, \mathcal{F}', \phi, \psi$, the second
  paragraph of the lemma can be expressed as a single first-order
  sentence in the language of $\Mm^\dag$ and $\Qq_p^\dag$.  The
  sentence holds in $\Qq_p^\dag$ by construction, so it holds in the
  elementary extension $\Mm^\dag$.
\end{proof}
\begin{definition}
  Suppose that $G$ is a definably compact group in $\Mm$ and $D
  \subseteq G$ is a definable subset.  Let $\mathcal{F} = \{G_i\}_{i
    \in I}$ be a 0-definable family containing $G$.  (This exists by
  Corollary~\ref{duh-cor}.)  Let $i \in I$ be such that $G =
  G_{i}$.  Let $D = G \cap \phi(\Mm;b)$ for some
  $\mathcal{L}_{rings}$-formula $\phi$ and parameter $b$.  We define
  $\mu_G(D) \in \Rr^*$ by
  \begin{equation*}
    \mu_G(D) = f_{\mathcal{F},\phi}(i,b).
  \end{equation*}
  This is well-defined, independent of the choice of $\mathcal{F}, i,
  \phi, b$, by Lemma~\ref{ridiculemma}.
\end{definition}
\begin{proposition}\label{keisler-measure}
  Let $G$ be a definably compact group in $\Mm$, and let $D, D'$ be
  definable subsets of $G$.
  \begin{enumerate}
  \item \label{km-1} $\mu_G(\varnothing) = 0$ and $\mu_G(G) = 1$.
  \item \label{km-2} $0 \le \mu_G(D) \le 1$.
  \item \label{km-3} $\mu_G(D \cup D') = \mu_G(D) + \mu_G(D') - \mu_G(D \cap D')$.
  \item \label{km-4} If $a \in G$, then $\mu_G(a \cdot D) = \mu_G(D)$.
  \item \label{magic} For any $\mathcal{L}_{rings}$-formula $\phi(x;y)$ of
    VC-dimension $k$ and any $n$, if $N = N_{k,1/n}$ is as in
    Remark~\ref{notation}, then there are $a_1,\ldots, a_N \in G$ such that
    if $D = \phi(\Mm;b) \subseteq G$ and $\mu_G(D) > 1/n$, then $D \cap
    \{a_1,\ldots,a_N\} \ne \varnothing$.
  \end{enumerate}
\end{proposition}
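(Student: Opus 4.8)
The plan is to observe that each of the five clauses becomes, once we fix suitable auxiliary data, a single first-order sentence of $\Qq_p^\dag$; that sentence holds in $\Qq_p^\dag$ by a standard fact about real Haar measure on compact $p$-adic groups (or, for clause~(\ref{magic}), by Proposition~\ref{main-target}); and it therefore holds in the elementary extension $\Mm^\dag$. This is exactly the transfer mechanism used to prove Lemma~\ref{ridiculemma}. Concretely, fix a $0$-definable family $\mathcal{F} = \{G_i\}_{i \in I}$ of definably compact groups with $G = G_i$ for some $i \in I(\Mm)$ (Corollary~\ref{duh-cor}), and write $D = G_i \cap \phi(\Mm;b)$ and $D' = G_i \cap \psi(\Mm;c)$ for $\mathcal{L}_{rings}$-formulas $\phi,\psi$ and parameters $b,c$, so that $\mu_G(D) = f_{\mathcal{F},\phi}(i,b)$ and $\mu_G(D') = f_{\mathcal{F},\psi}(i,c)$ by definition.

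Clauses (\ref{km-1})--(\ref{km-4}) are then immediate transfers. For (\ref{km-1}), take $\phi$ to define $\varnothing$, respectively $G_i$, and transfer $\forall i \in I\,\forall b\,(f_{\mathcal{F},\phi}(i,b) = 0)$, respectively $(f_{\mathcal{F},\phi}(i,b) = 1)$. For (\ref{km-2}), transfer $\forall i \in I\,\forall b\,(0 \le f_{\mathcal{F},\phi}(i,b) \le 1)$. For (\ref{km-3}), note $D \cup D' = G_i \cap (\phi \vee \psi)(\Mm;(b,c))$ and $D \cap D' = G_i \cap (\phi \wedge \psi)(\Mm;(b,c))$, and transfer the inclusion--exclusion identity relating $f_{\mathcal{F},\phi \vee \psi}$, $f_{\mathcal{F},\phi \wedge \psi}$, $f_{\mathcal{F},\phi}$ and $f_{\mathcal{F},\psi}$, valid in $\Qq_p^\dag$ by finite additivity of Haar measure. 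For (\ref{km-4}), use that multiplication and inversion in a $0$-definable family of definable groups are uniformly ring-definable in $i$, so $a \cdot D = G_i \cap \phi'(\Mm;(b,a,i))$ for a suitable $\mathcal{L}_{rings}$-formula $\phi'$; then left-invariance of Haar measure on compact $p$-adic groups yields $\forall i \in I\,\forall a \in G_i\,\forall b\,(f_{\mathcal{F},\phi'}(i,(b,a,i)) = f_{\mathcal{F},\phi}(i,b))$ in $\Qq_p^\dag$, which we transfer. (Since there are only countably many $\mathcal{L}_{rings}$-formulas, each of these is genuinely a single sentence: we quantify over parameters, never over formulas.)

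Clause (\ref{magic}) is the main point. Here $N = N_{k,1/n}$ is a \emph{fixed} natural number, depending only on the VC-dimension $k$ of $\phi$ and on $n$, so ``$\exists a_1 \cdots \exists a_N$'' is a genuine finite block of quantifiers; moreover ``$\mu_G(D) > 1/n$'' unwinds to ``$f_{\mathcal{F},\phi}(i,b) > 1/n$'', a legitimate formula of $\Qq_p^\dag$ since $1/n$ is a $0$-definable element of the real sort (which carries its full field structure). Applying Proposition~\ref{main-target} in $\Qq_p$ to the compact group $G_i(\Qq_p)$ for every $i \in I(\Qq_p)$ — compact by Fact~\ref{standard-compare}, with Haar measure computed by $f_{\mathcal{F},\phi}(i,b) = \mu_{G_i}(\phi(\Qq_p;b) \cap G_i)$, and taking $\varepsilon = 1/n$ so that $N$ is the same in every case — shows that the sentence
\begin{equation*}
  \forall i \in I\; \exists a_1,\ldots,a_N \in G_i\; \forall b\left[ \Big(\forall x\,(\phi(x;b) \to x \in G_i) \ \wedge\ f_{\mathcal{F},\phi}(i,b) > \tfrac{1}{n}\Big) \longrightarrow \bigvee_{\ell=1}^N \phi(a_\ell;b) \right]
\end{equation*}
holds in $\Qq_p^\dag$. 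By elementarity it holds in $\Mm^\dag$, and specializing to the index $i$ with $G = G_i$ produces the required $a_1,\ldots,a_N \in G$. The only delicate point anywhere is checking that each clause really is a single first-order sentence over $\Qq_p^\dag$ — for (\ref{magic}) this rests on $N$ being constant, and for (\ref{km-4}) on the uniform definability of the group law — after which the whole proposition is just an instance of the transfer principle of Lemma~\ref{ridiculemma}.
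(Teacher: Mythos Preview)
Your proof is correct and follows exactly the approach the paper intends: the paper's own proof of this proposition is simply ``The proof is similar to Lemma~\ref{ridiculemma}, transferring things from $\Qq_p^\dag$ to $\Mm^\dag$. Part~(\ref{magic}) uses Proposition~\ref{main-target},'' and you have faithfully unpacked that sketch, including the key observations that $N$ is a fixed integer (so the existential block is first-order) and that the group law is uniformly definable in the index $i$.
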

\begin{proof}
  The proof is similar to Lemma~\ref{ridiculemma}, transfering things
  from $\Qq_p^\dag$ to $\Mm^\dag$.  Part (\ref{magic}) uses
  Proposition~\ref{main-target}.
\end{proof}
\begin{proposition}\label{hard-dir}
  Let $G$ be a definably compact group in $\Mm$.  Then $G$ has
  \textit{fsg}.
\end{proposition}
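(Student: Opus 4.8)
The plan is to distil from the left-invariant Keisler measure $\mu_G$ of Proposition~\ref{keisler-measure} a global type all of whose formulas cut out positive-measure subsets of $G$, and then to use part~(\ref{magic}) to show that this type, together with all its left translates, is finitely satisfiable over a suitable small model. First I would fix parameters: for each $\mathcal{L}_{rings}$-formula $\phi(x;y)$ whose $x$-variables range over the ambient space of $G$ and each integer $n \ge 1$, Proposition~\ref{keisler-measure}(\ref{magic}) supplies a finite set $A_{\phi,n} \subseteq G$ such that every $\phi$-set $D \subseteq G$ with $\mu_G(D) > 1/n$ meets $A_{\phi,n}$, and crucially $A_{\phi,n}$ depends only on $\phi$ and $n$. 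Since there are only countably many pairs $(\phi,n)$, the union $\bigcup_{\phi,n} A_{\phi,n}$ is small, so I may fix a small model $M_0 \preceq \Mm$ containing it and the parameters defining $G$.

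Next I would build the type. Write $q(\Mm) = \bigcap_{\chi \in q} \chi(\Mm)$, and call a set $q$ of formulas over $\Mm$ in the variables $x$ \emph{heavy} if $(x \in G) \in q$ and $\mu_G(q_0(\Mm)) > 0$ for every finite $q_0 \subseteq q$ with $(x \in G) \in q_0$. The set $\{x \in G\}$ is heavy because $\mu_G(G) = 1$. If $q$ is heavy and $\psi(x)$ is a formula over $\Mm$, then $q \cup \{\psi\}$ or $q \cup \{\neg\psi\}$ is heavy: otherwise there would be finite $q_1, q_2 \subseteq q$ with $\mu_G(q_1(\Mm) \cap \psi(\Mm)) = 0 = \mu_G(q_2(\Mm) \cap \neg\psi(\Mm))$, and then for the finite set $q_0 := q_1 \cup q_2 \cup \{x \in G\} \subseteq q$, monotonicity and finite additivity of $\mu_G$ (Proposition~\ref{keisler-measure}(\ref{km-1})--(\ref{km-3})) would give $\mu_G(q_0(\Mm)) = \mu_G(q_0(\Mm) \cap \psi(\Mm)) + \mu_G(q_0(\Mm) \cap \neg\psi(\Mm)) = 0$, contradicting heaviness. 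Enumerating all formulas over $\Mm$ in $x$ and iterating this step transfinitely from $\{x \in G\}$ (limit stages being harmless, since heaviness is a condition on finite subsets) yields a complete heavy type $p \in S_G(\Mm)$: heaviness makes every finite subset of $p$ nonempty, so $p$ is consistent, and it concentrates on $G$.

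Finally I would check \textit{fsg}. Fix $g \in G(\Mm)$ and a formula $\psi(x) \in g \cdot p$; it suffices to realize $\psi$ in $G(M_0)$. Since $g \cdot p$ is a complete type concentrating on $G$, the formula $\theta(x;b) := \psi(x) \wedge (x \in G)$ lies in $g \cdot p$, where $\theta(x;y)$ is parameter-free; hence $\theta(\Mm;b) \subseteq G$ and $g^{-1} \cdot \theta(\Mm;b) \in p$. By heaviness of $p$ and left-invariance (Proposition~\ref{keisler-measure}(\ref{km-4})), $\mu_G(\theta(\Mm;b)) = \mu_G(g^{-1} \cdot \theta(\Mm;b)) > 0$, so $\mu_G(\theta(\Mm;b)) > 1/n$ for some $n$. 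As $p$CF is NIP, $\theta$ has finite VC-dimension $k$, and by construction $A_{\theta,n} \subseteq M_0$; since $\theta(\Mm;b)$ is a $\theta$-set contained in $G$ of measure $> 1/n$, part~(\ref{magic}) gives $a \in A_{\theta,n} \subseteq G(M_0)$ with $\theta(a;b)$, hence $\psi(a)$. Thus every formula of $g \cdot p$ is realized in $G(M_0)$; as $g \cdot p$ is closed under finite conjunction, it is finitely satisfiable in $M_0$. Therefore $p$ and $M_0$ witness that $G$ has \textit{fsg}.

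I expect the only delicate point to be the construction of $p$: one must propagate the positive-measure condition through the completion of the type without destroying consistency, which is precisely where finite additivity of $\mu_G$ enters. Everything else is bookkeeping together with the left-invariance and the VC-theoretic uniformity already packaged into Proposition~\ref{keisler-measure}.
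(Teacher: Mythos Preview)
There is a genuine gap in the last paragraph. The function $\mu_G$ of Proposition~\ref{keisler-measure} takes values in the nonstandard real field $\Rr^*$, not in $\Rr$ (see the definition just before that proposition); in particular it is \emph{not} a Keisler measure, contrary to your opening sentence. Hence the inference ``$\mu_G(\theta(\Mm;b)) > 0$, so $\mu_G(\theta(\Mm;b)) > 1/n$ for some $n$'' is invalid: $\mu_G(\theta(\Mm;b))$ can perfectly well be a positive infinitesimal. Your construction of $p$ does nothing to prevent this---at the very first step you are free to add a formula $\psi$ with $\mu_G(\psi(\Mm)\cap G)$ infinitesimal (both $\{x\in G,\psi\}$ and $\{x\in G,\neg\psi\}$ are heavy in your sense), and then part~(\ref{magic}) tells you nothing about $\psi$.

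The repair is exactly the move the paper makes: pass to the standard part $\mu^*(D) := \operatorname{st}(\mu_G(D))$. Parts (\ref{km-1})--(\ref{km-4}) transfer to $\mu^*$, so your ``heavy'' construction goes through verbatim with $\mu^*$ in place of $\mu_G$, and now $\mu^*(\theta(\Mm;b)) > 0$ legitimately yields $\mu^*(\theta(\Mm;b)) > 1/n$ for some standard $n$, whence $\mu_G(\theta(\Mm;b)) > 1/n$ and part~(\ref{magic}) applies. With this correction your argument is essentially the paper's, except that you unpack by hand the step the paper outsources to \cite[Remark~4.6]{goo} (extracting an \textit{fsg} type from a left-invariant Keisler measure finitely satisfiable in a small model).
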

\begin{proof}
  For any definable set $D \subseteq G$, let $\mu^*(D)$ be the
  standard part of $\mu_G(D)$.  Parts (\ref{km-1})--(\ref{km-4}) of
  Proposition~\ref{keisler-measure} imply that $\mu^*(D)$ is a
  left-invariant Keisler measure on $G$.  By part (\ref{magic}), we
  can find a countable set $A \subseteq G$ such that if $\mu^*(D) >
  0$, then $D \cap A \ne \varnothing$.  Then $\mu^*$ and all of its left
  translates are finitely satisfiable in a small model.  By
  \cite[Remark~4.6]{goo}, $G$ has \textit{fsg}.
\end{proof}
The $\Mm$ of this section is a monster model of the complete theory
$p$CF, so Proposition~\ref{hard-dir} applies to definable groups in
\emph{any} model of $p$CF.  Combined with Proposition~\ref{easy-dir},
Theorem~\ref{main-thm} follows.

\begin{acknowledgment}
  The author was supported by the National Natural Science Foundation
  of China (Grant No.\@ 12101131).  This paper grew out of discussions
  with Ningyuan Yao.  Anand Pillay provided some helpful references.
\end{acknowledgment}
\bibliographystyle{plain} \bibliography{mybib}{}

\end{document}